\documentclass[11pt]{article}
\usepackage{subcaption}
\usepackage{graphicx} 
\usepackage{float}
\usepackage{amssymb}
\usepackage{amsmath}
\usepackage{booktabs} 
\usepackage{amsthm}
\usepackage{xcolor}
\usepackage[final]{hyperref}
\usepackage{accents}
\usepackage{mathtools}
\usepackage[margin=0.5in]{geometry}

\makeatletter
\DeclareRobustCommand{\cev}[1]{%
  \mathpalette\do@cev{#1}%
}
\newcommand{\do@cev}[2]{%
  \fix@cev{#1}{+}%
  \reflectbox{$\m@th#1\vec{\reflectbox{$\fix@cev{#1}{-}\m@th#1#2\fix@cev{#1}{+}$}}$}%
  \fix@cev{#1}{-}%
}
\newcommand{\fix@cev}[2]{%
  \ifx#1\displaystyle
    \mkern#23mu
  \else
    \ifx#1\textstyle
      \mkern#23mu
    \else
      \ifx#1\scriptstyle
        \mkern#22mu
      \else
        \mkern#22mu
      \fi
    \fi
  \fi
}
\makeatother
\newtheorem{lemma}{Lemma}

\title{A PDE-Based Framework for Generative Modeling Beyond Classical Score-Based Diffusion}
\author{
Michael Herty\\
{\small Institute for Geometry and Practical Mathematics}\\
{\small RWTH Aachen University, Germany}\\
{\small Extraordinary Professor, Department of Mathematics and Applied Mathematics}\\
{\small University of Pretoria, South Africa} \\
{\small herty@igpm.rwth-aachen.de}
\and
Horacio Tettamanti\\
{\small Department of Mathematics ``F. Casorati''}\\
{\small University of Pavia, Italy}\\
{\small horacio.tettamanti01@universitadipavia.it}
}

\begin{document}
\maketitle
	\begin{abstract}

		We introduce an alternative generative framework based on a nonlinear modification of the classical Ornstein--Uhlenbeck dynamics. The proposed dynamics admits both a microscopic description through an interacting particle system and, in the mean-field limit, a macroscopic formulation given by a nonlinear Fokker--Planck equation with a superlinear drift term. We show that, for suitable choices of the model parameters and sufficiently large initial mass, the forward dynamics exhibits condensation phenomena by proving the loss of $L^2$ regularity of the solution in finite time. Building upon this formulation, we derive a stabilized reverse-time partial differential equation that reconstructs the initial distribution from the asymptotic state of the forward dynamics, thereby extending the generative paradigm beyond the classical score-based framework. Furthermore, we introduce numerical discretizations of both the forward and reverse processes that accurately capture the asymptotic behavior of the continuous model while successfully reconstructing the initial distribution. Numerical experiments in one and two spatial dimensions validate the proposed methodology and illustrate its application to density filtering through successive iterations of the generative process.

	\end{abstract}

\section{Introduction} \label{sect:intro}

In recent years, generative diffusion models have emerged as a powerful class of generative models, demonstrating remarkable capabilities across various domains such as computer vision, natural language processing, multi-modal modeling, among many others~\cite{BVRKB,DN,GLFK,HSGCNF,KPHZC,LTGLH,SCCLHSFN}. Several approaches have been developed, with score-based generative models~\cite{SSKKEP} and denoising diffusion probabilistic models~\cite{HJA} emerging as two of the most influential methodologies. We do not aim to review all existing work on diffusion models but follow the view point of \cite{CJLLM} and we refer to this publication for a more detailed referee of existing approaches. Furthermore, we point the interest reader to following references for more detailed review and novel mathematical approaches recently developed~\cite{D,ND,SWMG,SE,SE2,T,W}.

In general, we may state, the goal of a generative model is to produce new samples that are statistically consistent with a given dataset. Formally, let $\{x_i\}_{i=1}^N$ be samples drawn from an underlying distribution $f_0 \in \mathcal{P}(\mathbb{R}^d)$, where $d$ denotes the dimension of the ambient space. The objective is to generate a new set of samples $\{\tilde{x}_i\}_{i=1}^M$ that follow the same (or a similar) underlying distribution. A classical approach to nonparametric estimation of the underlying distribution is kernel density estimation, which builds an approximation of $f_0$ from the $N$ available samples~\cite{B,FRSST,STC}. While effective in low-dimensional settings, this method suffers from the curse of dimensionality, limiting its applicability in high dimensions. To overcome this curse of dimensionality, generative models have emerged as a powerful class of methods for learning and sampling from high-dimensional probability distributions. The general idea underlying these models consists of two complementary stages: a forward process and a backward process. During the forward step, the original data are progressively corrupted by noise, transforming the underlying distribution into a simpler reference distribution, typically chosen to be Gaussian. The backward step aims at reversing this transformation, thereby reconstructing the original distribution from the reference one. As we shall see, the information required to perform this reconstruction is encoded in a quantity known as the \textit{score function}, which plays a fundamental role in correctly steering the reverse dynamics toward the target distribution. Most generative models of this type are naturally formulated in terms of stochastic differential equations (SDEs). In this work, however, we focus on their emerging description at the PDE level. This perspective provides a complementary framework for the analysis of generative models and opens the possibility of employing analytical tools to gain a deeper understanding of such dynamics. To this end, we begin by briefly reviewing the classical generative process and its associated PDE formulation as introduced in~\cite{CJLLM}.

\textbf{A general review from a PDE perspective.} We begin our short review by describing the classical generative process and introducing the associated PDE formulation. To begin with, we consider the following stochastic differential equation (SDE) in $\mathbb{R}^d$ associated with the forward generative process know as the Ornstein--Uhlenbeck process (or Langevin Equation):
\begin{equation} \label{eq:ou}
    dX_t = -X_t\, dt + \sqrt{2}\, dB_t, \quad X_0 \sim \rho_0,
\end{equation}
where $\{B_t\}_{t\ge 0}$ is a $d$-dimensional Brownian motion and $\rho_0$ is the initial distribution of the particle system. An explicit solution for such system can be obtained by applying the integrating factor method, which yields
\begin{equation}
    X_t = e^{-t}X_0 + \sqrt{2}e^{-t}\int_0^t e^{z}\, dB_z
\end{equation}
from where we can notice that the distribution of $X_t$ is normally distributed as $\mathcal{N}\sim(e^{-t} X_0,\sigma^2_t Id)$ where $\sigma^2_t = 1 - e^{-2t}$. The asymptotic distribution of the forward process is therefore characterize by a standard normal distribution $\mathcal{N}(0,Id)$. Such process naturally induces an operator known as the Kolmogorov backward equation. From a different perspective, the same description may be re obtained by considering the mean-field limit of a system of $N$ interacting particles, which leads to the following Fokker--Planck equation for the evolution of the density $\rho(x,t)$ of the process \cite{Toscani}:
\begin{equation}
    \partial_t \rho = \nabla_x \cdot (x\rho) + \Delta \rho, \quad \rho(x,0) = \rho_0(x).
\end{equation}
In order to retrieve the original distribution $\rho_0$ from the forward process, we need to consider the backward-in-time dynamics. To do so, we introduce a new distribution $q(\mathbf{x},s)$ with $s = T - s$ such that $q(\mathbf{x},0) = \rho(\mathbf{x},T)$. We therefore obtain that the PDE associated with the backward process is given by

\begin{equation}
    \partial_s q
    =
    -\nabla_x\cdot(xq)
    -
    \Delta q,
\end{equation}
with $q(\cdot,0)=\rho(\cdot,T)$. Owing to the presence of the anti-diffusion operator, the above equation is ill-posed. A possible stabilization consists in decomposing the anti-diffusion term according to
\begin{equation}
    \Delta\phi
    =
    2\nabla\cdot(\nabla\phi)
    -
    \Delta\phi,
\end{equation}
and replacing the first occurrence of $\nabla\phi$ by the gradient of the known forward solution,
\[
\nabla q(\mathbf{x},s)
=
\nabla\rho(\mathbf{x},T-s).
\]
This yields the stabilized reverse process
\begin{equation}
    \partial_s q
    =
    -\nabla_x\cdot
    \Big(
    [-x-2\nabla\log\rho(\cdot,T-s)]\,q
    \Big)
    +
    \Delta q,
\end{equation}
provided that $q(\mathbf{x},s)=\rho(\mathbf{x},T-s)$. The above construction is not unique. For instance, one may instead consider the alternative decomposition
\begin{equation}
    \Delta\phi
    =
    2\Delta\phi
    -
    \Delta\phi,
\end{equation}
which leads to the reverse-time equation
\begin{equation}
    \partial_s q
    =
    \nabla_x\cdot
    \left(
    -xq
    -
    2\nabla\rho(\cdot,T-s)
    \right)
    +
    \Delta q.
\end{equation}
In this formulation, the information of the forward dynamics is encoded through the quantity
\[
-2\nabla\rho(\cdot,T-s),
\]
which may be interpreted as an alternative score function. More generally, these formulations illustrate that the generative process can be described entirely at the PDE level through a pair of forward and reverse evolution equations. The role of the score function is to encode the information accumulated during the forward dynamics and to steer the reverse evolution towards the reconstruction of the initial distribution. In practical implementations of score-based generative models, this quantity is typically approximated by a neural network. From an application standpoint, in order to apply such generative model we need to obtain a suitable SDE associated with the backward process. Following the work introduced in~\cite{CJLLM}, the SDE associated with the backward process is given by
\begin{equation}\label{ref:back_sde}
    d\vec{X}_s = \vec{X}_s\, ds + 2 \nabla \log \rho(\vec{X}_s,T-s)\, ds + \sqrt{2}\, d\vec{B}_s.
\end{equation}
A more explicit expression for the score function can be obtained by considering the conditional expectation of the initial position $X_0$ given that the process is at position $X_t = x$ at time $t$, namely
\begin{equation}\label{eq:x0}
    \vec{\mathbf{x}}_0(x,t) = \mathbb{E}[X_0 \mid X_t = x].
\end{equation}
By a direct application of Bayes' rule, together with the fact that the transition probabilities follow a  Gaussian distribution—due to the underlying Ornstein--Uhlenbeck dynamics—we obtain the following result. We refer the reader to~\cite{CJLLM} for a detailed proof.

\begin{lemma}
Let $X_0 \sim \rho_0$ be the initial distribution. Then
\begin{equation}
    \vec{\mathbf{x}}_0(x,t) =
    \frac{\displaystyle \int_{\mathbb{R}^d} y \, \rho_0(y)\, p(x,t \mid y,0)\, dy}
         {\displaystyle \rho(x,t)}.
\end{equation}
Moreover, the score function can be written as
\begin{equation}
    \nabla \log \rho(x,t) = \frac{e^{-t}\,\vec{\mathbf{x}}_0(x,t) - x}{1 - e^{-2t}}.
\end{equation}

\end{lemma}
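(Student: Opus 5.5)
We prove the two identities in turn: the first from Bayes' rule, the second by differentiating the marginal density under the integral sign and using the explicit Gaussian form of the transition kernel.

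\textbf{Step 1: the conditional expectation.} From the explicit solution of \eqref{eq:ou}, conditionally on $X_0=y$ the variable $X_t$ is Gaussian with mean $e^{-t}y$ and covariance $\sigma_t^2 Id$, where $\sigma_t^2=1-e^{-2t}$. Hence
\[
p(x,t\mid y,0)=(2\pi\sigma_t^2)^{-d/2}\exp\Big(-\frac{|x-e^{-t}y|^2}{2\sigma_t^2}\Big).
\]
The joint density of $(X_0,X_t)$ is $\rho_0(y)\,p(x,t\mid y,0)$. Its $x$-marginal is the forward density
\[
\rho(x,t)=\int_{\mathbb{R}^d}\rho_0(y)\,p(x,t\mid y,0)\,dy,
\]
which is strictly positive for $t>0$. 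Bayes' rule gives the conditional density of $X_0$ given $X_t=x$ as $\rho_0(y)\,p(x,t\mid y,0)/\rho(x,t)$. Taking its first moment yields the first formula for $\vec{\mathbf{x}}_0(x,t)$, provided $\rho_0$ has a finite first moment so that the numerator is finite.

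\textbf{Step 2: differentiating the marginal.} The Gaussian kernel and its $x$-gradient are bounded by integrable Gaussians, locally uniformly in $x$. Since $\rho_0$ is a probability density with finite first moment, dominated convergence lets us differentiate under the integral sign. Using
\[
\nabla_x p(x,t\mid y,0)=-\frac{x-e^{-t}y}{\sigma_t^2}\,p(x,t\mid y,0),
\]
we obtain
\[
\nabla\rho(x,t)=\frac{1}{\sigma_t^2}\Big(e^{-t}\int_{\mathbb{R}^d} y\,\rho_0(y)\,p(x,t\mid y,0)\,dy-x\,\rho(x,t)\Big).
\]

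\textbf{Step 3: the score.} Dividing by $\rho(x,t)>0$ and substituting the formula from Step~1 gives
\[
\nabla\log\rho(x,t)=\frac{e^{-t}\,\vec{\mathbf{x}}_0(x,t)-x}{1-e^{-2t}},
\]
which is the second claim.

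The computation itself is routine. The only delicate point is justifying differentiation under the integral and the positivity of $\rho$, both of which use $t>0$ and the finite first moment of $\rho_0$. If $\rho_0$ is only a probability measure, the same argument goes through verbatim with $\rho_0(y)\,dy$ replaced by $d\rho_0(y)$.
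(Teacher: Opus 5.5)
Your proof is correct and follows the route the paper indicates: the paper gives no detailed proof, only citing Bayes' rule and the Gaussian Ornstein--Uhlenbeck transition kernel and deferring to the reference. You obtain the posterior mean from Bayes' rule and then differentiate $\rho(x,t)=\int \rho_0(y)\,p(x,t\mid y,0)\,dy$ using $\nabla_x p=-\sigma_t^{-2}(x-e^{-t}y)\,p$. One small refinement: your finite-first-moment assumption is not needed, because for fixed $t>0$ both $|y|\,p(x,t\mid y,0)$ and $|\nabla_x p(x,t\mid y,0)|$ are bounded in $y$ (locally uniformly in $x$), so the numerator is finite and differentiation under the integral is justified for any probability measure $\rho_0$.
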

Given the previous Lemma the reverse diffusion dynamics at the SDE level can be rewritten as
\begin{equation}
    d\vec{X}_s = \vec{X}_s\, ds + 2 \frac{\alpha_t\,\vec{\mathbf{x}}_0(\vec{X}_s,s) - \vec{X}_s}{\beta_t}\, ds + \sqrt{2}\, d\vec{B}_s.
\end{equation}
with $t = T - s$, $\alpha_t = e^{-t}$ and $\beta_t = 1 - e^{-2t}$. The drift term of the reverse SDE naturally decomposes into two parts: the forward drift and a score-based correction term. A straight way of deriving a numerical scheme for computing the update of $\vec{X}_s$ is to consider an Euler--Maruyama discretization of the SDE, which reads

\begin{equation}
    \vec{X}_{s+\Delta s} = \vec{X}_s +  \Big(\vec{X}_s + 2 \frac{\alpha_t\,\vec{\mathbf{x}}_0(\vec{X}_{s_n},t_n) - \vec{X}_s}{\beta_t}\Big)\, \Delta s + \sqrt{\Delta s_n}\, \epsilon_n
\end{equation}
with $t_n = T - s_n$ and $\epsilon_n \sim \mathcal{N}(0,I)$. From Equation~\eqref{eq:x0} we observe that $\vec{\mathbf{x}}_0$ depends on the entire distribution $\rho(x,t)$, making the update of $\vec{X}_s$ inherently nonlinear and dependent on the current state of the system. By assuming a discretization by an Euler-Maruyama scheme we are assuming that the whole scheme is freezed in a single time step $[s_n, s_{n+1}]$. A more accurate method is to assume that only the quantity $\vec{\mathbf{x}}_0$ is frozen in the time step. Under this assumption it can be shown that a suitable time update for $\vec{X}_{s_{n+1}}$ is given by

\begin{equation}\label{eq:sde_back_upd}
\begin{aligned}
\vec{X}_{s_{n+1}} &= \mu_{n+1} + \sigma_{n+1}\,\epsilon_n, \\
\mu_{n+1} &= 
\frac{e^{-\Delta s_n}\big(1 - e^{-2t_{n+1}}\big)}{1 - e^{-2t_n}}\,\vec{X}_n
+
\frac{e^{-t_{n+1}}\big(1 - e^{-2\Delta s_n}\big)}{1 - e^{-2t_n}}\,\langle \vec{x}_0 \rangle, \\
\sigma_{n+1}^2 &= 
\frac{\big(1 - e^{-2t_{n+1}}\big)\big(1 - e^{-2\Delta s_n}\big)}{1 - e^{-2t_n}}.
\end{aligned}
\end{equation}
where $|\Delta t_n| = |t_{n+1} - t_n| = |-s_{n+1} + s_n|$. This exact-in-time integration of the linearized reverse diffusion leads to a more accurate numerical scheme update of the backward solution compared to the standard Euler-Maruyama approach.

In this work, we show that the generative paradigm can be extended beyond the classical diffusion framework by constructing a generative process from a nonlinear modification of the Ornstein--Uhlenbeck dynamics. We formulate the proposed model at both the microscopic and macroscopic levels, deriving a nonlinear Fokker--Planck equation through well-established mean-field techniques for interacting particle systems. A distinctive feature of the resulting forward dynamics is the occurrence of finite-time condensation for suitable choices of the model parameters and sufficiently large initial mass. Consequently, the asymptotic state is attained within a finite time horizon, in contrast with classical diffusion-based generative models, whose equilibrium distribution is reached only asymptotically in time. Based on this formulation, we derive a stabilized reverse-time diffusion process and introduce a numerical discretization that accurately reproduces both the forward and reverse dynamics, successfully reconstructing the initial distribution from its asymptotic state. Finally, we illustrate the proposed framework through applications to density reconstruction and density filtering.

The remainder of the manuscript is organized as follows. In Section~2, we introduce the modified forward dynamics and derive the corresponding PDE formulation through the mean-field limit of a system of interacting particles. We then show that, for suitable choices of the model parameters, the forward dynamics may exhibit finite-time blow-up by proving the loss of $L^2$ regularity of the solution. Subsequently, following the same philosophy as in classical generative models, we derive the associated reverse-time diffusion process. In Section~3, we present a numerical discretization of the generative process in both one- and two-dimensional settings. In Section~4, we present several applications of the proposed framework, with particular emphasis on image filtering and image reconstruction. Finally, Section~5 is devoted to conclusions and perspectives for future research.

\section{Diffusion Model with Condensation-Type Dynamics}


As an alternative to the classical Ornstein--Uhlenbeck process, we consider the following nonlinear interacting particle system given by the following SDE:
\begin{equation}
dX_t^i
=
-
X_t^i
\left(
1+\left[f_{\varepsilon,N}(X_t^i)\right]^\alpha
\right)dt
+
\sqrt{2}\,dB_t^i,
\qquad
X_t^i(0)=x_i(0),
\end{equation}
where $\alpha > 0$ is a model parameter, $\{B_t^i\}_{i=1}^N$ are independent $d$-dimensional Wiener processes, and $f_{\varepsilon,N}$ denotes the regularized empirical density defined as
\begin{equation}
f_{\varepsilon,N}(x)
:=
\frac1N
\sum_{i=1}^N
\psi_\varepsilon(x-X_t^i),
\end{equation}
with $\psi_\varepsilon$ being a mollifier, which we choose as the Gaussian kernel
\begin{equation}
\psi_\varepsilon(y)
=
\frac1{(2\pi\varepsilon)^{d/2}}
\exp\left(-\frac{|y|^2}{2\varepsilon}\right).
\end{equation}
The modified drift term in the SDE introduces a nonlinear dependence on the empirical density of the particle system, whose effect is to enhance the attraction of particles towards regions of higher density. When the number of particles becomes large, it is natural to describe the collective dynamics through the evolution of the associated probability density. This can be achieved through the use of well established mean-field techniques~\cite{CFRT,CFTV,G,Toscani}. 

To this end, we consider a system of $N$ interacting particles $X_t^i\in\mathbb{R}^d$ where for any test function $\varphi\in C_c^\infty(\mathbb{R}^d)$ it follows from It\^o's formula that

\begin{equation}
\begin{aligned}
d\varphi(X_t^i)
&=
-
\nabla\varphi(X_t^i)\cdot
X_t^i
\left(
1+f_{\varepsilon,N}(X_t^i,t)^\alpha
\right)dt + \sqrt2\, \nabla\varphi(X_t^i)\cdot dB_t^i +
\Delta\varphi(X_t^i)\,dt.
\end{aligned}
\end{equation}
Therefore, summing over all the contributions of the particle system, we obtain
\begin{equation}
\begin{aligned}
\frac{1}{N}\sum_{i=1}^{N} d\varphi(X_t^i)
&=
-\frac{1}{N}\sum_{i=1}^{N}
\nabla\varphi(X_t^i)\cdot
X_t^i
\left(
1+f_{\varepsilon,N}(X_t^i,t)^\alpha
\right)\,dt
\\
&\quad
+
\frac{\sqrt{2}}{N}
\sum_{i=1}^{N}
\nabla\varphi(X_t^i)\cdot dB_t^i
\\
&\quad
+
\frac{1}{N}
\sum_{i=1}^{N}
\Delta\varphi(X_t^i)\,dt .
\end{aligned}
\end{equation}
Introducing the empirical measure associated with the particle system we formally obtain the weak formulation
\begin{equation}
\partial_t \langle f_N,\varphi\rangle
=
-
\left\langle
f_N,
x\left(1+f_{\varepsilon,N}(x,t)^\alpha\right)\cdot\nabla\varphi
\right\rangle
+
\left\langle
f_N,\Delta\varphi
\right\rangle .
\end{equation}
Integrating by parts yields
\begin{equation}
\left\langle
\partial_t f_N
-
\nabla_x\cdot
\left[
x\left(1+f_{\varepsilon,N}(x,t)^\alpha\right)f_N
\right]
-
\Delta f_N,
\varphi
\right\rangle
=0 
\end{equation}
which in strong form reads
\begin{equation}
\partial_t f_N
=
\nabla_x\cdot
\left[
x\left(1+f_{\varepsilon,N}(x,t)^\alpha\right)f_N
\right]
+
\Delta f_N .
\end{equation}
To establish compactness, we derive a uniform bound on the second moment. Applying It\^o's formula to $|X_t^i|^2$ gives
\begin{equation}
\frac{d}{dt}
\mathbb{E}\bigl[|X_t^i|^2\bigr]
=
-
2\,
\mathbb{E}
\left[
|X_t^i|^2
\bigl(1+f_{\varepsilon,N}(X_t^i,t)^\alpha\bigr)
\right]
+
2d.
\end{equation}
Since $1+f_{\varepsilon,N}^\alpha\ge1$, the second moment remains uniformly bounded in time. Consequently, by Prokhorov's theorem, the sequence $\{f_N\}_N$ is relatively compact in the weak-$*$ topology, and there exists a subsequence $(f_{N_\ell})_{\ell>0}$ such that
\[
f_{N_\ell}
\rightharpoonup
\rho,
\qquad
\ell\to\infty.
\]
Assuming that the regularized empirical density $f_{\varepsilon,N}$ converges strongly to $\rho$ as $N\to\infty$ and $\varepsilon\to0$, we formally obtain the nonlinear Fokker--Planck equation
\begin{equation}
\partial_t\rho
=
\nabla_x\cdot
\Bigl(
x(1+\rho^\alpha)\rho
\Bigr)
+
\Delta\rho,
\qquad
\rho(x,0)=\rho_0(x).
\label{eq:FP_f}
\end{equation}

Originally, this superlinear Fokker--Planck equation was introduced by Kaniadakis and Quarati as a semiclassical kinetic equation describing the dynamics of bosonic particles~\cite{KQ}. It was later employed in opinion dynamics to model conformity effects within a population~\cite{CDTZ}, and has since been adapted to the framework of Consensus-Based Optimization, where its condensation properties accelerate the concentration of particles towards the consensus state~\cite{FPZ}.

\subsection{Loss of \texorpdfstring{$L_2$}{L2} Regularity and Singularity Formation}

In the previous section, we formally derived the nonlinear Fokker--Planck equation
\eqref{eq:FP_f} as the mean-field limit of the interacting particle system. We now investigate one of its most distinctive qualitative features, namely the emergence of condensation phenomena. This behavior was originally introduced to describe the dynamics of bosonic particles, whose long-time evolution is characterized by the formation of a condensate. From the mathematical viewpoint, this corresponds to proving that the solution concentrates into a Dirac measure in finite time.

Throughout this section, we restrict our attention to the one-dimensional case and assume that $\alpha>2$. In this regime, provided that the initial mass is sufficiently large, the nonlinear drift induces finite-time condensation. By contrast, for $\alpha\le2$ it is known that the solution remains smooth for all times, and we refer the interested reader to~\cite{TZ} for a detailed analysis. To establish the formation of condensates, we begin by recalling that for every non-negative function $\phi\in L^2(\mathbb{R})$ we have

\begin{equation}
\begin{aligned}
     \int_{\mathbb{R}} \phi(x)\,dx &= \int_{R \leq |x|} \phi(x)\,dx + \int_{|x| < R} \phi(x)\,dx  \\
    &\leq \frac{1}{R^2} \int_{\mathbb{R}} x^2 \phi(x)\,dx + (2R)^{1/2}\Big(\int_{\mathbb{R}} \phi^2(x)\,dx\Big)^{1/2}.
\end{aligned}
\end{equation}    
Optimizing over the choice of the radius $R$ and re-writing terms, we obtain the following inequality
\begin{equation}\label{eq:moment_norm_rel}
    M_2[\phi]=\int_{\mathbb R}x^2\phi(x)\,dx
    \ge
    \left(\frac{1}{5}\right)^5
    \frac{(2\sqrt2)^4
    M_0^5}
    {\left(\displaystyle\int_{\mathbb R}\phi^2(x)\,dx\right)^2}.
\end{equation}
where $M_k = \int_{\mathbb R} x^k \phi(x)\,dx$ denotes the $k$-th moment. From this inequality we may observe that the second order moment is related to the $L^2$ norm of the solution. Hence, studying the evolution of the total energy can give us information about the regularity of the solution. We define the energy of the system as
\[
I(t):=\int_{\mathbb R}x^2f(x,t)\,dx
\]
which we differentiate in time and use \eqref{eq:FP_f} to obtain
\begin{equation}\label{eq:moment_identity}
\frac{d}{dt}I(t)
=
2M-2I(t)-2\int_{\mathbb R}x^2f^{\alpha+1}\,dx.
\end{equation}
Therefore, at each time the evolution of the energy depends on higher order moments of the distributions. We seek to find a bound, in terms of the energy itself, that allows us to quantify the time evolution of the energy. To do so we have the following result:

\begin{lemma}

For $\alpha>2$ we have that

\[
\int_{\mathbb R}x^2f^{\alpha+1}\,dx \ge \tilde{C}_{\alpha} M_0^{\frac{3\alpha}{2}} I(t)^{\frac{2-\alpha}{2}},
\]

with $\tilde{C}_{\alpha} =  
\Big(\frac{\alpha - 2}{2\alpha}\Big)^{\alpha} 2^{-\frac{3\alpha}{2}}$.
\end{lemma}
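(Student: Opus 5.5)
We bound the total mass $M_0=\int_{\mathbb R}f\,dx$ from above by a combination of $J:=\int_{\mathbb R}x^2f^{\alpha+1}\,dx$ and the energy $I(t)$, in the same spirit as the splitting used to obtain \eqref{eq:moment_norm_rel}. The difference is that on the inner region we apply a weighted H\"older inequality that produces $J$ in place of the $L^2$ norm. Fix $R>0$ and split as before:
\[
M_0=\int_{|x|\ge R}f\,dx+\int_{|x|<R}f\,dx\le \frac{I(t)}{R^2}+\int_{|x|<R}f\,dx .
\]

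On the inner region, write $f=|x|^{\frac{2}{\alpha+1}}f\cdot |x|^{-\frac{2}{\alpha+1}}$. H\"older's inequality with exponents $\alpha+1$ and $\frac{\alpha+1}{\alpha}$ then gives
\[
\int_{|x|<R}f\,dx\le J^{\frac{1}{\alpha+1}}\Big(\int_{-R}^{R}|x|^{-\frac{2}{\alpha}}\,dx\Big)^{\frac{\alpha}{\alpha+1}}
= J^{\frac{1}{\alpha+1}}\Big(\frac{2\alpha}{\alpha-2}\Big)^{\frac{\alpha}{\alpha+1}}R^{\frac{\alpha-2}{\alpha+1}} .
\]
This is exactly where $\alpha>2$ enters: it is needed so that $|x|^{-2/\alpha}$ is integrable at the origin. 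I expect this choice of weight and exponents to be the only delicate point. Everything else is bookkeeping.

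Next we choose $R$ so that the tail term absorbs half of the mass, $I/R^2=M_0/2$, i.e.\ $R=(2I/M_0)^{1/2}$. This yields
\[
\frac{M_0}{2}\le J^{\frac{1}{\alpha+1}}\Big(\frac{2\alpha}{\alpha-2}\Big)^{\frac{\alpha}{\alpha+1}}\Big(\frac{2I}{M_0}\Big)^{\frac{\alpha-2}{2(\alpha+1)}} .
\]
Raising both sides to the power $\alpha+1$ and solving for $J$ gives
\[
J\ge \Big(\frac{M_0}{2}\Big)^{\alpha+1}\Big(\frac{\alpha-2}{2\alpha}\Big)^{\alpha}\Big(\frac{2I}{M_0}\Big)^{\frac{2-\alpha}{2}} .
\]

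It remains to collect the exponents. The power of $M_0$ is $\alpha+1+\frac{\alpha-2}{2}=\frac{3\alpha}{2}$. The power of $2$ is $-(\alpha+1)-\frac{\alpha-2}{2}=-\frac{3\alpha}{2}$. The power of $I$ is $\frac{2-\alpha}{2}$. Together these give exactly the constant $\tilde C_\alpha=\big(\frac{\alpha-2}{2\alpha}\big)^{\alpha}2^{-\frac{3\alpha}{2}}$ in the statement.
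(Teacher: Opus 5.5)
Your proof is correct and follows the paper's argument step for step: Chebyshev's inequality on the tail $|x|\ge R$, the weighted H\"older inequality with exponents $\alpha+1$ and $\frac{\alpha+1}{\alpha}$ on the inner region (where $\alpha>2$ makes $|x|^{-2/\alpha}$ integrable), and the choice $R=\sqrt{2I/M_0}$, which guarantees that the inner mass is at least $M_0/2$. Your exponent bookkeeping is also right, and cleaner than the paper's last display: the paper writes $\tilde C_\alpha$ there, but at that point it should be $C_\alpha=\big(\frac{\alpha-2}{2\alpha}\big)^{\alpha}$, since the powers of $2$ have not yet been absorbed.
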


\begin{proof}
For any $R>0$ we may define the mass contained in the ball of radius $R$ as $M_R(t) = \int_{|x|\le R}f(x,t)\,dx$.

\begin{equation}
\begin{aligned}    
    M_R(t) = \int_{|x|\le R} f(x,t)\,dx = \int_{|x|\le R} \Big(|x|^{\frac{2}{\alpha+1}} f(x,t) \Big) |x|^{-\frac{2}{\alpha+1}}\,dx \leq \\
    \Big(\int_{|x|\le R} |x|^2 f^{\alpha+1}(x,t)\,dx\Big)^{\frac{1}{\alpha+1}} \Big(\int_{|x|\le R} |x|^{-\frac{2}{\alpha}}\,dx\Big)^{\frac{\alpha}{\alpha+1}}
\end{aligned}
\end{equation}
where we have applied Hölder's inequality with exponents $p = \alpha + 1$ and $q = \frac{\alpha + 1}{\alpha}$. Since $\alpha > 2$, we have that $\int_{|x|\le R} |x|^{-\frac{2}{\alpha}}\,dx = \frac{2\alpha}{\alpha - 2} R^{1 - \frac{2}{\alpha}}$. Therefore, we obtain the following bound 

\begin{equation}
\int_{\mathbb{R}} |x|^2 f^{\alpha + 1}(x,t)\,dx \geq C_{\alpha} M_R^{\alpha + 1} R^{-(\alpha - 2)}.
\end{equation}
Here,  we also used that $x^2 f^{\alpha + 1}$ is a positive quantity to extend the radius of integration to the whole real line and $C_\alpha$ is constant which depends solely on the value of $\alpha$. Next, we notice that setting $R = \sqrt{\frac{2 I(t)}{M}}$ we can write
\begin{equation}
\int_{|x| > R} f(x,t)\,dx \leq \frac{1}{R^2} \int_{\mathbb{R}} x^2 f(x,t)\,dx = \frac{I(t)}{R^2} = \frac{M_0}{2}.
\end{equation}
From which we obtain the following bound
\begin{equation}
    \int_{\mathbb{R}} |x|^2 f^{\alpha + 1}(x,t)\,dx \geq \tilde{C}_{\alpha} \left(\frac{M_0}{2}\right)^{\alpha + 1} \left(\frac{2 I(t)}{M_0}\right)^{-\frac{\alpha - 2}{2}}.
\end{equation}
where $\tilde{C}_{\alpha} = \Big(\frac{\alpha - 2}{2\alpha}\Big)^{\alpha} 2^{-\frac{3\alpha}{2}}$ and our proof is concluded.
\end{proof}

Making use of the previous lemma, we write the following bound for the time evolution of the energy

\begin{equation}
    I'(t) \le 2M - 2I(t) - 2\widetilde C_\alpha M^{\frac{3\alpha}{2}} I(t)^{-p}
\end{equation}
where we have defined $p:=\frac{\alpha-2}{2}>0$. Now we consider the function
\begin{equation}
    g(s):=s+As^{-p},
\end{equation}
where it can be easily verified that the minimum of such function is attained at $s_{min}=\left( pA \right)^{\frac{1}{p+1}}$ such that $g(s_{min}) = 2 K_\alpha M_0^3$ with $K_\alpha = (1+\frac{1}{p})(p C_\alpha)^{\frac{2}{\alpha}}$ and $C_\alpha = \Big( \frac{\alpha-2}{2\alpha}\Big)^{\alpha}$. Therefore, we have that if the inital mass is big enough, namely $M_0 > \frac{1}{\sqrt{K_\alpha}}$, then the second moment reaches zero in finite time. In particular, we may compute the characteristic time as 
\begin{equation}
    T^* = \frac{I(0)}{2K_\alpha M_0^3 - 2M_0}.
\end{equation}
This  proves that, provided the initial mass is sufficiently large, the second moment reaches zero in finite time. This in turn implies, from Equation \eqref{eq:moment_norm_rel}, that the $L^2$ norm of the solution blows up in finite time. A similar result can also be obtained in higher dimensional case,  where the critical condition is given for $\alpha > \frac{2}{d}$, where $d$ is the dimension of the ambient space. In conclusion, provided $\alpha > 2/d$ and the initial mass is sufficiently large, the solution of the nonlinear Fokker--Planck equation \eqref{eq:FP_f} loses $L^2$ regularity in finite time.

\subsection{Derivation of the Reverse-Time Diffusion Process}

Following the approach introduced in~\cite{CJLLM}, we now derive a reverse-time diffusion process associated with the nonlinear Fokker--Planck equation~\eqref{eq:FP_f}. The forward dynamics transports any initial distribution towards its stationary state, which, depending on the values of the model parameters, may be reached in finite time. The objective of the reverse dynamics is to reconstruct the initial distribution starting from the asymptotic state generated by the forward evolution. To this end, we introduce the time-reversed density
\[
q(\mathbf{x},s), \qquad s=T-t,
\]
with initial condition
\[
q(\mathbf{x},0)=\rho(\mathbf{x},T),
\]
where $\rho$ denotes the solution of the forward problem~\eqref{eq:FP_f}. A direct change of variables yields the formal backward equation
\begin{equation}
\partial_s q(\mathbf{x},s)
=
\nabla_x\cdot
\Bigl(
\mathcal{B}[\rho]\,q(\mathbf{x},s)
\Bigr)
-
\Delta q(\mathbf{x},s),
\end{equation}
where
\[
\mathcal{B}[\rho]
=
-\mathbf{x}\bigl(1+\rho^\alpha\bigr).
\]

The presence of the anti-diffusion term renders the above equation ill-posed. Following~\cite{CJLLM}, we introduce a stabilization by decomposing the diffusion operator as
\[
\Delta q
=
2\Delta q-\Delta q,
\]
and replacing the second occurrence of $q$ by the known solution of the forward problem,
\[
q(\mathbf{x},s)=\rho(\mathbf{x},T-s).
\]
This leads to the stabilized reverse-time diffusion process
\begin{equation}
\partial_s q
=
-\nabla_x\cdot
\Bigl(
\mathcal{B}_{\mathrm{rev}}[\rho]\,q
\Bigr)
+
\Delta q
-
2\Delta\rho(\mathbf{x},T-s),
\qquad
q(\mathbf{x},0)=\rho(\mathbf{x},T),
\end{equation}
where
\[
\mathcal{B}_{\mathrm{rev}}[\rho]
=
\mathbf{x}\bigl(1+\rho(\mathbf{x},T-s)^\alpha\bigr)
+
2\frac{\nabla\rho(\mathbf{x},T-s)}
{\rho(\mathbf{x},T-s)}.
\]
Equivalently, the reverse process can be written as
\begin{equation}
\partial_s q(\mathbf{x},s)
=
\nabla_x\cdot
\Bigl(
-\mathbf{x}
\bigl(
1+\rho(\mathbf{x},T-s)^\alpha
\bigr)
q(\mathbf{x},s)
\Bigr)
+
\Delta q(\mathbf{x},s)
-
2\Delta\rho(\mathbf{x},T-s).
\label{eq:FP_b}
\end{equation}
We remark that the above stabilization is not unique. Alternative regularization strategies can also be employed, and we refer the interested reader to~\cite{DM,KAAL} for further examples.

\section{Numerical Approximation of the Nonlinear Generative Process}

Following~\cite{CJLLM}, generative processes are typically formulated through stochastic differential equations, while the corresponding partial differential equations are primarily employed for the theoretical analysis of diffusion models. In this work, we adopt a complementary viewpoint by constructing the generative process directly at the PDE level. Our objective is to develop numerical discretizations that accurately capture both the forward and reverse dynamics described by equations~\eqref{eq:FP_f} and~\eqref{eq:FP_b}, respectively, while preserving their generative properties. For clarity of exposition, we first present the discretization in the one-dimensional setting. The extension to higher spatial dimensions is then obtained by means of dimensional splitting. Since the construction of the backward discretization follows the same procedure, we describe the extension only for the forward equation~\eqref{eq:FP_f}.

\subsection{Discretization of the Forward Process}

We first consider the numerical approximation of the forward equation. Our goal is to design a numerical scheme that is able to capture the asymptotic distribution of Equation~\eqref{eq:FP_f}. We begin by introducing a discretization of the spatial domain given by $x_i = -L + i\Delta x$, where $i = 0, \ldots, N_x$ and $\Delta x = \frac{2L}{N_x}$. Since the equation consists of a nonlinear transport term and a linear diffusion term, we discretize the two contributions separately. For the discretization of the diffusion operator we consider a standard second-order central finite difference scheme 

\begin{equation}
    \Delta_x f(x_i) \approx \frac{f(x_{i+1}) - 2f(x_i) + f(x_{i-1})}{\Delta x^2}
\end{equation}
For the non-linear drift we treat it as a conservation law with nonlinear velocity and we consider a finite volume scheme with an upwind discretization of the flux. We rewrite the drift as 

\begin{equation}
\begin{aligned}
    \nabla \cdot \Big[ x\,(1+\beta [ f(x,t) ]^{\alpha})\,&f(x,t) \Big] = -\nabla \cdot (v(x,t)f(x,t)) \\ 
    \text{with }  v(x,t) &= -x\,(1+\beta [ f(x,t) ]^{\alpha})
\end{aligned}
\end{equation}
which allows us to obtain the following discretization for the non-linear drift

\begin{equation}
\begin{aligned}
\partial_x \Big( x\,(1+\beta f^\alpha)\,f \Big)\Big|_{x_i}
&\approx
- \frac{F_{i+\frac{1}{2}} - F_{i-\frac{1}{2}}}{\Delta x}, \\[6pt]
F_{i+\frac{1}{2}} 
&= v_{i+\frac{1}{2}}^{\mathrm{eff}} \, f_{i+\frac{1}{2}}^{\mathrm{up}}, \\[6pt]
v_{i+\frac{1}{2}}^{\mathrm{eff}} 
&= - \frac{x_i + x_{i+1}}{2}
\left(1 + \beta \left[f_{i+\frac{1}{2}}^{\mathrm{up}}\right]^{\alpha}\right), \\[8pt]
f_{i+\frac{1}{2}}^{\mathrm{up}} 
&=
\begin{cases}
f_i,     & \text{if } v_{i+\frac{1}{2}}^{\mathrm{eff}} > 0, \\[4pt]
f_{i+1}, & \text{if } v_{i+\frac{1}{2}}^{\mathrm{eff}} < 0.
\end{cases}
\end{aligned}
\end{equation}
where the choice of $f_{i+\frac{1}{2}}^{\mathrm{up}}$ is such that the transported state is chosen according to the sign of the effective velocity $v_{i+\frac{1}{2}}^{\mathrm{eff}}$. By considering an explicit Euler scheme for the time discretization we obtain the following numerical scheme for the forward process:

\begin{equation}
    f_i^{n+1} = f_i^n - \Delta t \, \Big( -\frac{F_{i+\frac{1}{2}}^n - F_{i-\frac{1}{2}}^n}{\Delta x} +  \frac{f_{i+1}^n - 2f_i^n + f_{i-1}^n}{\Delta x^2} \Big)
\end{equation}
where the choice of the time step $\Delta t$ is such that the following CFL condition is satisfied:

\begin{equation}
    \Delta t \leq \min \Bigg\{ \frac{\Delta x^2}{2}, \frac{\Delta x}{\max_i |v_{i+\frac{1}{2}}^{\mathrm{eff}}|} \Bigg\}
\end{equation}
In this explicit formulation we observe that the size of the time step, $\Delta t$, becomes arbitrarily small in the critical case where the solution concentrates in a single point leading to the formation of a Dirac delta. To overcome this issue and relax the condition on the $\Delta t$ an implicit formulation could be considered. Although the explicit scheme becomes restrictive in the supercritical regime, we show in Section~\ref{sect:numerical_results} that, in the subcritical regime, it accurately reproduces the forward and reverse generative dynamics.

\subsection{Discretization of the  Backward Process}

We now turn to the numerical approximation of the backward stabilized problem. 
Given the forward solution $\rho(x,t)$, we denote by $q(x,s)$ the backward density, 
with $s = T - t$, and impose the terminal condition
\begin{equation}
q(x,T) = \rho(x,T).
\end{equation}
As in the forward case, we discretize the spatial domain using the grid 
$x_i = -L + i\Delta x$, $i = 0, \ldots, N_x$. The backward equation is composed of 
a transport term, a diffusion term, and a stabilization contribution depending on 
the forward density. As before, we treat each term separately. We discretize the Laplacian using the same second-order central difference:
\begin{equation}
\Delta_x q(x_i) \approx \frac{q_{i+1} - 2q_i + q_{i-1}}{\Delta x^2}.
\end{equation}
The backward transport is defined using a velocity field frozen from the forward 
solution. More precisely, at time level $s_k$ we define
\begin{equation}
v_i^{k} = x_i\Big(1 + [\rho_i^{k+1}]^{\alpha}\Big),
\end{equation}
where $\rho_i^{k+1}$ denotes the forward solution at the corresponding time. 
The transport term is discretized in conservative form:
\begin{equation}
\partial_x (v q)\Big|_{x_i}
\approx
- \frac{F_{i+\frac{1}{2}}^{k} - F_{i-\frac{1}{2}}^{k}}{\Delta x}.
\end{equation}
where the numerical flux is defined as
\begin{equation}
F_{i+\frac{1}{2}}^{k} 
= v_{i+\frac{1}{2}}^{k} \, q_{i+\frac{1}{2}}^{\mathrm{adj},k},
\end{equation}
with
\begin{equation}
v_{i+\frac{1}{2}}^{k} = \frac{v_i^{k} + v_{i+1}^{k}}{2}.
\end{equation}
Since the backward equation evolves in reverse time, the transport term is discretized using the reverse-upwind (downwind) state given by
\begin{equation}
q_{i+\frac{1}{2}}^{\mathrm{adj},k}
=
\begin{cases}
q_{i+1}^{k}, & \text{if } v_{i+\frac{1}{2}}^{k} > 0, \\[4pt]
q_i^{k},     & \text{if } v_{i+\frac{1}{2}}^{k} < 0.
\end{cases}
\end{equation}
Using an explicit Euler scheme, the backward update from $s_{k+1}$ to $s_k$ is given by
\begin{equation}
q_i^{k+1}
=
q_i^{k}
+
\Delta t \left[
- \frac{F_{i+\frac{1}{2}}^{k} - F_{i-\frac{1}{2}}^{k}}{\Delta x}
+
\frac{q_{i+1}^{k} - 2q_i^{k} + q_{i-1}^{k}}{\Delta x^2}
-
2 \, \frac{\rho_{i+1}^{k} - 2\rho_i^{k} + \rho_{i-1}^{k}}{\Delta x^2}
\right].
\end{equation}
Again we have that the time step $\Delta t$ is chosen such that the CFL condition is satisfied:
\begin{equation}
\Delta t \leq \min \left\{ \frac{\Delta x^2}{2}, \frac{\Delta x}{\max_i |v_i^{k}|} \right\}.
\end{equation}

\subsection{Extension to the Multi-Dimensional Case}
We extend the previous scheme to the two-dimensional setting on a Cartesian grid 
$(x_i,y_j)$ with uniform spacings $\Delta x$ and $\Delta y$. The extension follows using dimensional splitting. The forward equation reads
\begin{equation}
\partial_t \rho = \nabla \cdot \Big( (x,y)(1+\rho^\alpha)\rho \Big) + \Delta \rho.
\end{equation}
The diffusion term is discretized using standard second-order central differences,
\begin{equation}
\Delta_h \rho(x_i,y_j)
\approx
\frac{\rho_{i+1,j}-2\rho_{i,j}+\rho_{i-1,j}}{\Delta x^2}
+
\frac{\rho_{i,j+1}-2\rho_{i,j}+\rho_{i,j-1}}{\Delta y^2}.
\end{equation}
The nonlinear drift is treated as a conservation law and discretized using a finite volume upwind scheme applied separately in each spatial direction. Denoting by 
$\mathbf{v}=(v_x,v_y)$ with
\begin{equation}
v_x = -x(1+\rho^\alpha), 
\qquad
v_y = -y(1+\rho^\alpha),
\end{equation}
the divergence is approximated as
\begin{equation}
\nabla \cdot (\mathbf{v}\rho)
\approx
\frac{F^x_{i+\frac{1}{2},j}-F^x_{i-\frac{1}{2},j}}{\Delta x}
+
\frac{F^y_{i,j+\frac{1}{2}}-F^y_{i,j-\frac{1}{2}}}{\Delta y},
\end{equation}
where the fluxes are defined by upwinding,
\begin{equation}
F^x_{i+\frac{1}{2},j}
=
v^x_{i+\frac{1}{2},j}\,\rho^{\mathrm{up}}_{i+\frac{1}{2},j},
\qquad
F^y_{i,j+\frac{1}{2}}
=
v^y_{i,j+\frac{1}{2}}\,\rho^{\mathrm{up}}_{i,j+\frac{1}{2}},
\end{equation}
with the upwind states selected according to the sign of the corresponding velocity components. Using an explicit Euler discretization in time, the forward update is obtained as
\begin{equation}
\rho_{i,j}^{n+1}
=
\rho_{i,j}^{n}
+
\Delta t \left[
\frac{F^x_{i+\frac{1}{2},j}-F^x_{i-\frac{1}{2},j}}{\Delta x}
+
\frac{F^y_{i,j+\frac{1}{2}}-F^y_{i,j-\frac{1}{2}}}{\Delta y}
+
\Delta_h \rho_{i,j}^{n}
\right].
\end{equation}
The backward problem is constructed analogously to the forward case and we omit the details for brevity. As in the one-dimensional case, the CFL condition is determined by both the nonlinear drift, through the associated velocity field, and the diffusion term. In the two-dimensional setting, the time step $\Delta t$ must therefore satisfy the following stability condition:

\begin{equation}
\Delta t \leq \min \left\{
\frac{\Delta x}{\max_{i,j}|v^x_{i,j}|},
\frac{\Delta y}{\max_{i,j}|v^y_{i,j}|},
\frac{1}{2\left(\frac{1}{\Delta x^2}+\frac{1}{\Delta y^2}\right)}
\right\}.
\end{equation}

\section{Numerical Results}\label{sect:numerical_results}

In this section, we present several numerical experiments illustrating the capabilities of the proposed generative framework in both one and two spatial dimensions. We first validate the proposed numerical scheme by verifying that the forward and reverse dynamics accurately reconstruct the initial distribution from its asymptotic state. We then investigate the iterative application of the generative process as a filtering mechanism, showing that successive forward and reverse evolutions progressively remove high-frequency oscillations while preserving the main features of the underlying distribution. Finally, we consider the problem of density reconstruction in the presence of missing data and show that the particle formulation is able to recover missing regions of the distribution by properly modifying the prior distribution.

\subsection{Validation of the Numerical Generative Process}

We begin by validating the proposed numerical discretization through two one-dimensional reconstruction tests. First we consider two one-dimensional initial distributions given by

\begin{equation}
\begin{aligned}
f_0^{(1)}(x) &= C \Big( 2 e^{-30(x+3)^2} + 0.5 e^{-30(x-3)^2} + 0.5 e^{-30(x-1)^2} + 2 e^{-30(x+1)^2}\Big) \\[10pt]
f_0^{(2)}(x) &= C \Big( e^{-30(x+0.1)^2} + e^{-30(x-0.3)^2} + e^{-30(x-2.5)^2} + e^{-30(x+1.5)^2} \Big)
\end{aligned}
\end{equation}

\begin{figure}
    \centering

    \begin{subfigure}[t]{0.4\textwidth}
        \centering
        \includegraphics[width=\textwidth]{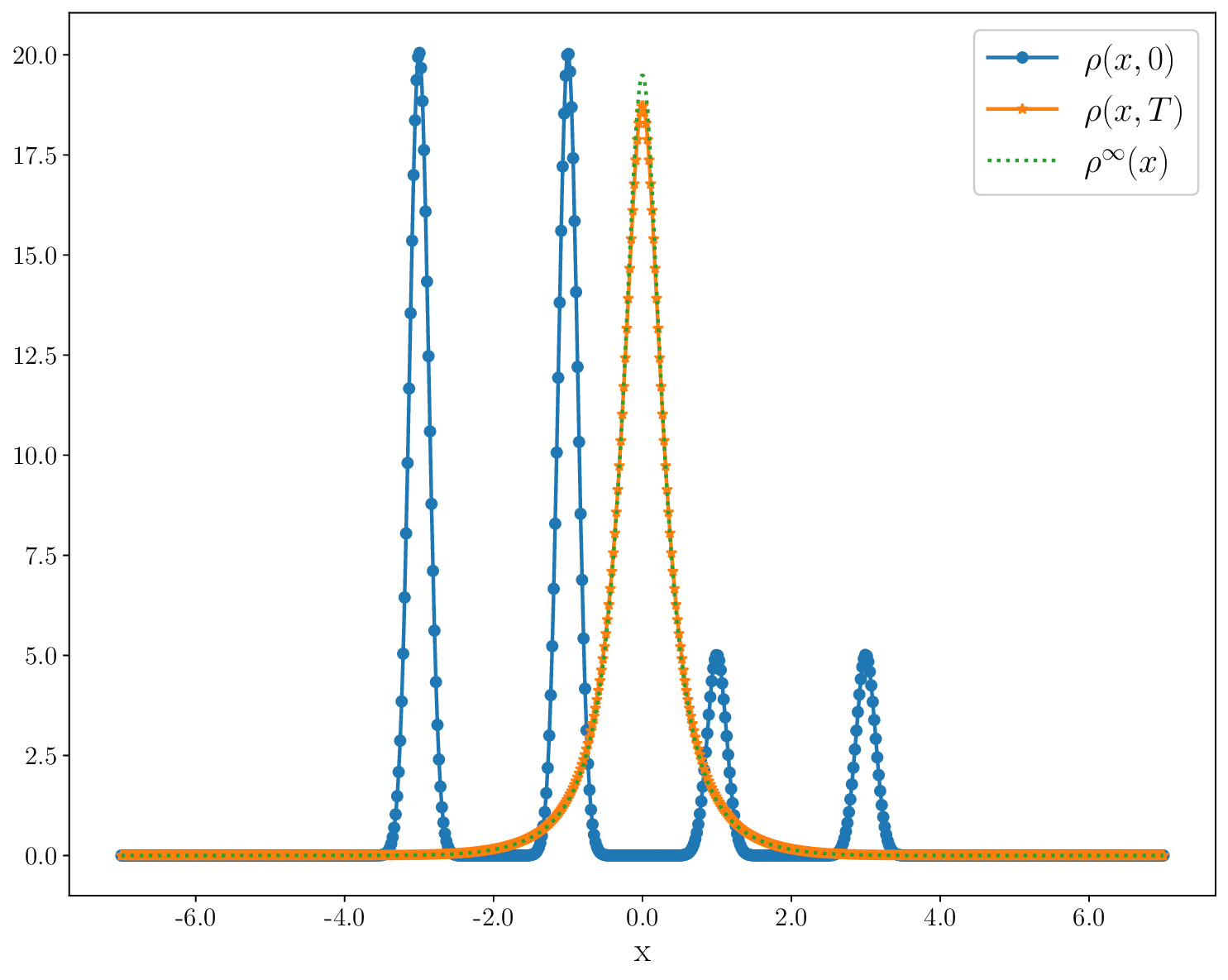}
        \caption{}
    \end{subfigure}
    \begin{subfigure}[t]{0.4\textwidth}
        \centering
        \includegraphics[width=\textwidth]{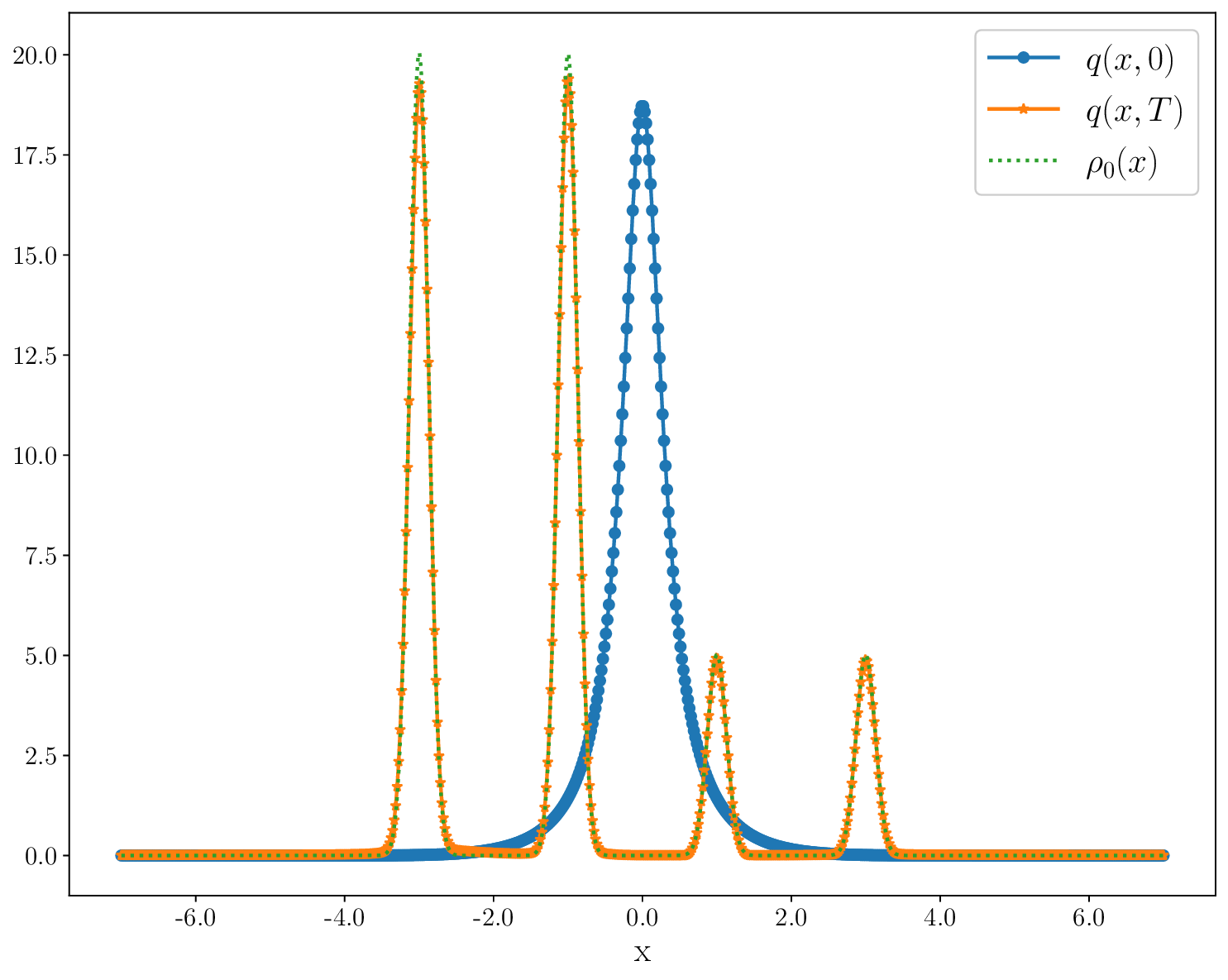}
        \caption{}
    \end{subfigure}

    \caption{
    Generative process of the numerical scheme with $\rho(\cdot,0)\sim f_0^{(1)}$, final time $T_f=10$, and initial mass $\mu=\mu_c/6$, chosen to avoid restrictive CFL conditions.
    \textbf{Left:} Initial distribution, numerical approximation of the stationary state obtained from the forward evolution, and the corresponding analytical solution.
    \textbf{Right:} Reconstructed initial distribution obtained after the backward process, together with the initial condition of the backward dynamics, which coincides with the stationary state of the forward step.
    In both cases, the numerical scheme accurately captures the structure of the Bose--Einstein distribution and successfully reconstructs the initial distribution. \textbf{Forward Error:} 0.0303, \textbf{Backward Error:} 0.0405.
    }
    \label{fig:1d_1}
\end{figure}

\begin{figure}
    \centering

    \begin{subfigure}[t]{0.4\textwidth}
        \centering
        \includegraphics[width=\textwidth]{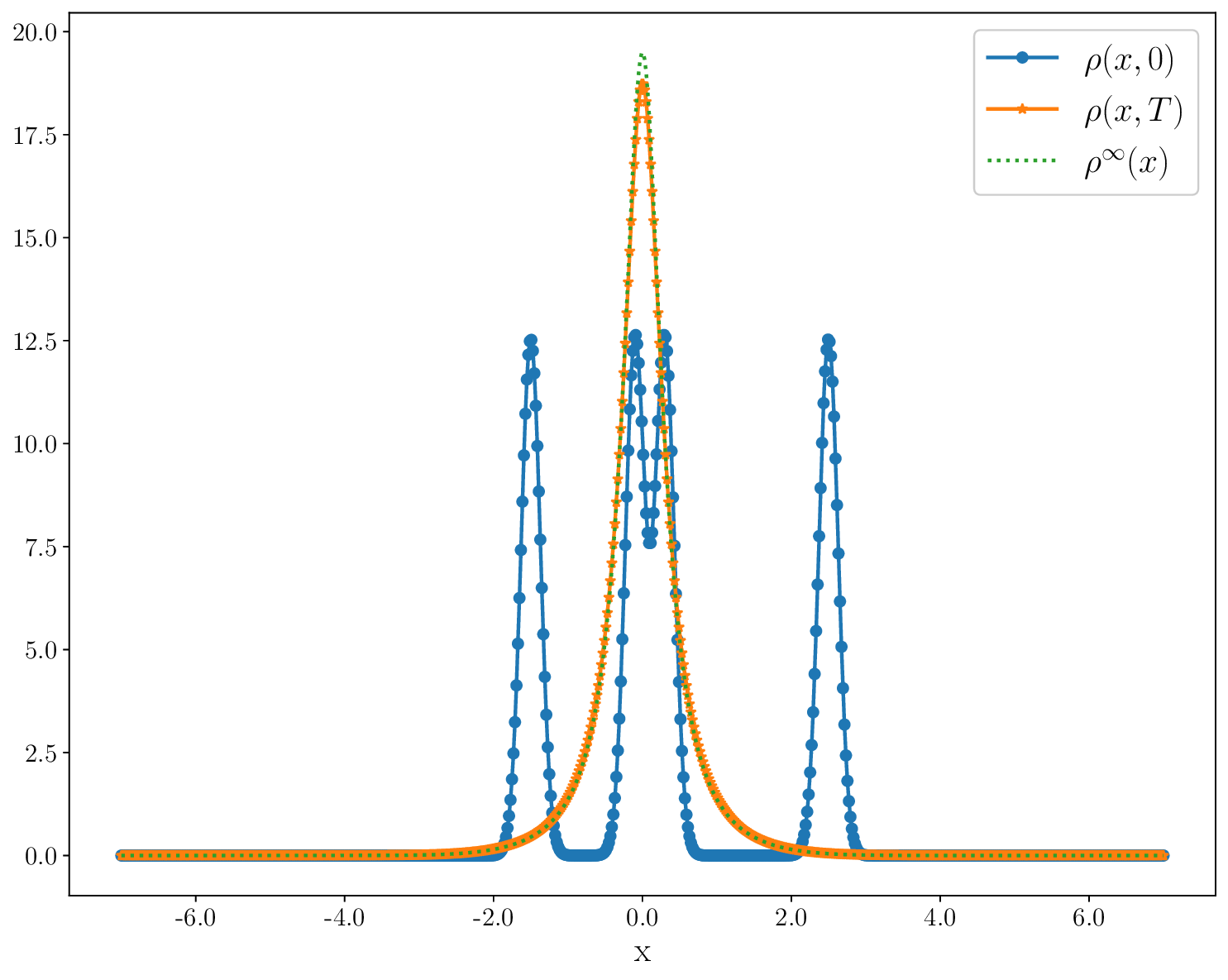}
        \caption{}
    \end{subfigure}
    \begin{subfigure}[t]{0.4\textwidth}
        \centering
        \includegraphics[width=\textwidth]{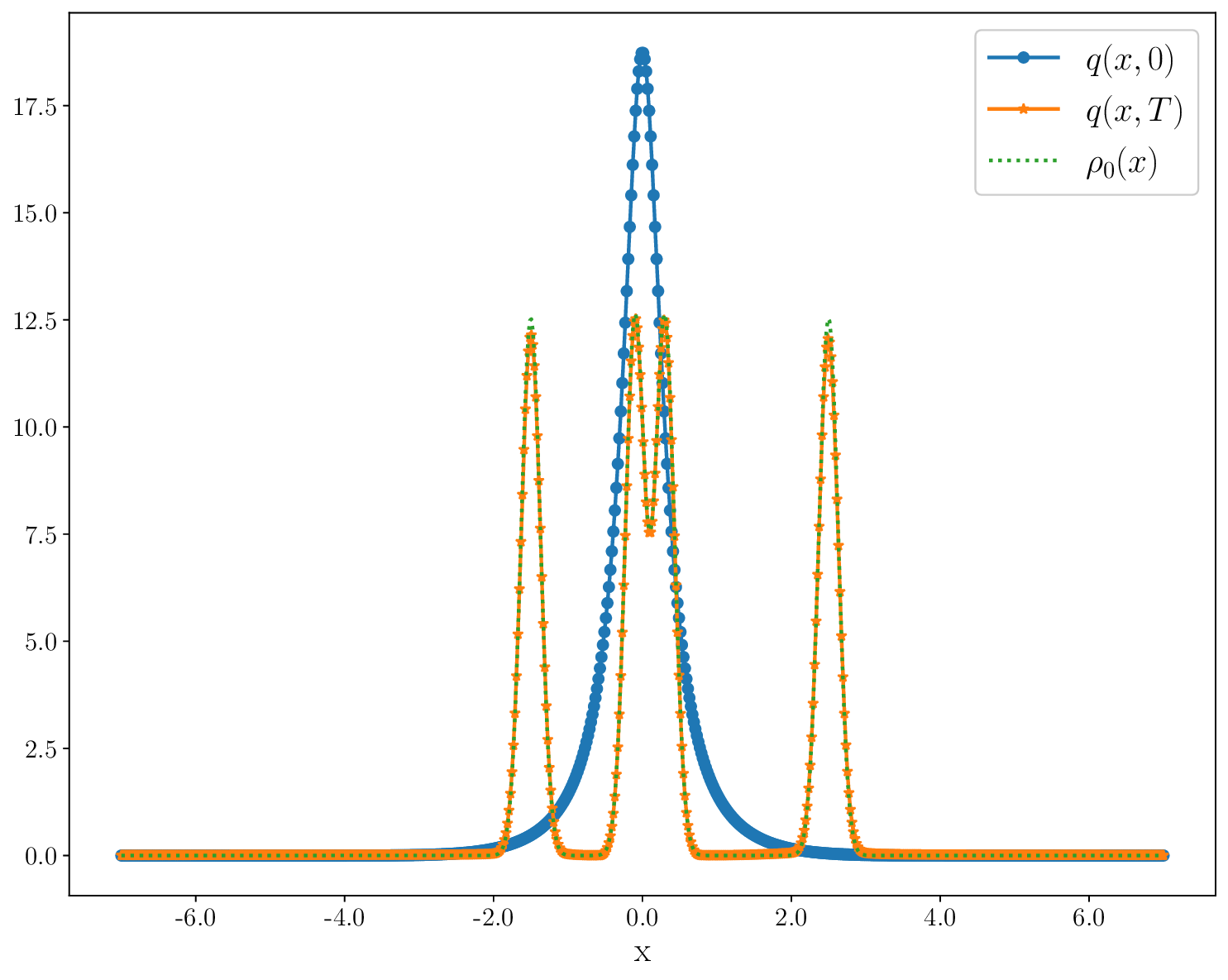}
        \caption{}
    \end{subfigure}

    \caption{
    Generative process of the numerical scheme with $\rho(\cdot,0)\sim f_0^{(2)}$, final time $T_f=10$, and initial mass $\mu=\mu_c/6$, chosen to avoid restrictive CFL conditions.
    \textbf{Left:} Initial distribution, numerical approximation of the stationary state obtained from the forward evolution, and the corresponding analytical solution.
    \textbf{Right:} Reconstructed initial distribution obtained after the backward process, together with the initial condition of the backward dynamics, which coincides with the stationary state of the forward step.
    In both cases, the numerical scheme accurately captures the structure of the Bose--Einstein distribution and successfully reconstructs the initial distribution. \textbf{Forward Error:} 0.0298, \textbf{Backward Error:} 0.0283.
    }
    \label{fig:1d_2}
\end{figure}

where $C$ is a normalization constant. We apply the scheme outlined in the previous section with $Nx=700$ in the domain $x \in [-L,L]$ with $L = 7$. As model parameter we set $\alpha = 1$ and as inital mass we chose $\mu = \mu_c / 60$ so that we work in the sub-critical regime and we avoid the formation of stiff CFL conditions. In Figure~\ref{fig:1d_1} and~\ref{fig:1d_2} we show the forward and backward evolution of the two initial distributions. In both cases, the backward dynamics accurately reconstruct the initial distribution from the asymptotic state, confirming that the numerical discretization preserves the generative properties of the continuous model. We next consider the same experiment in two spatial dimensions to assess the performance of the proposed framework beyond the one-dimensional setting. In this case we considered as initial distribution a mixture of four Gaussians given by

\begin{figure}
    \centering
    \begin{subfigure}{0.3\textwidth}
        \centering
        \includegraphics[width=\textwidth]{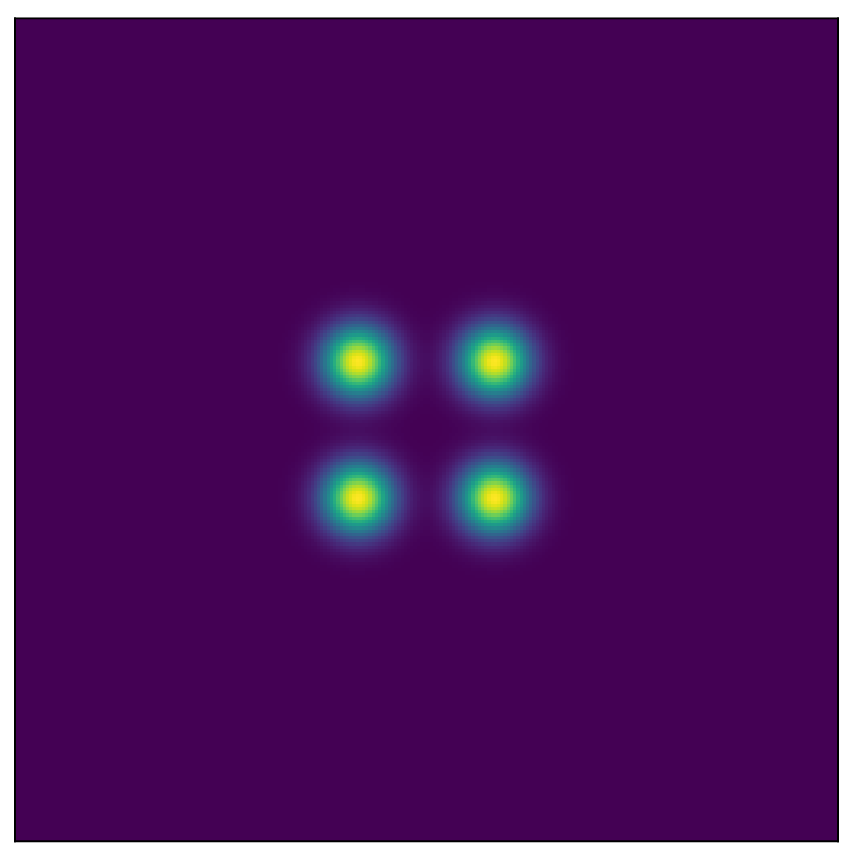}
        \caption{}
    \end{subfigure}

    \vspace{1cm}

    \begin{subfigure}{0.3\textwidth}
        \centering
        \includegraphics[width=\textwidth]{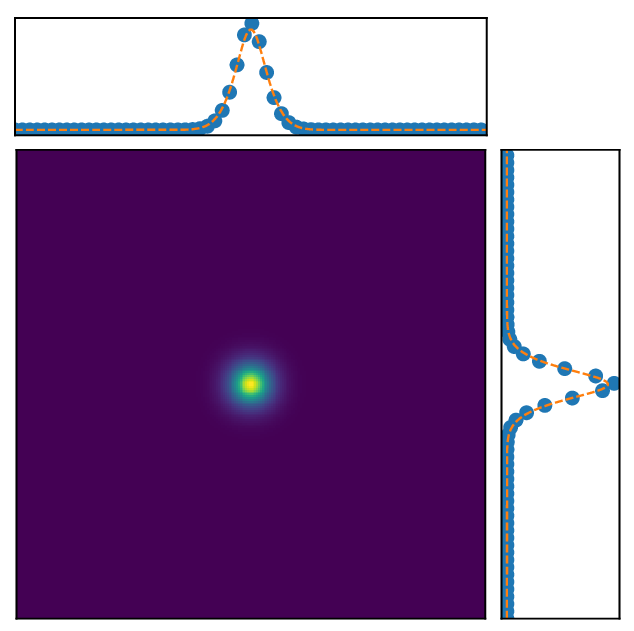}
        \caption{}
    \end{subfigure}
    \hspace{0.05cm}
    \begin{subfigure}{0.3\textwidth}
        \centering
        \includegraphics[width=\textwidth]{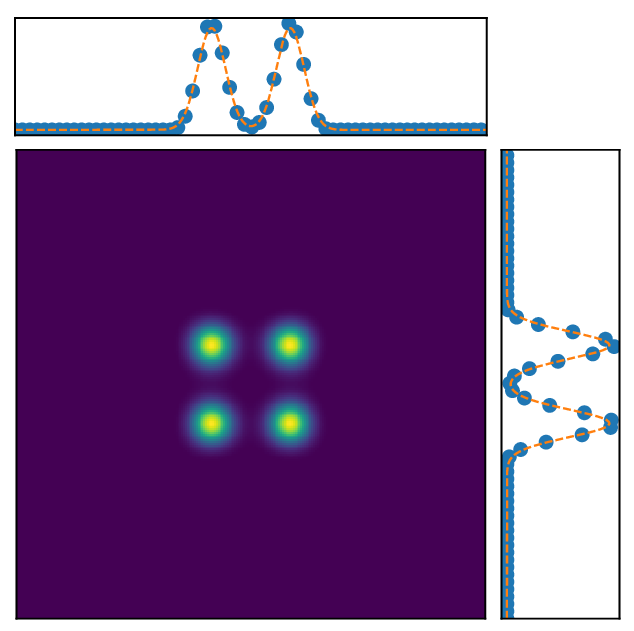}
        \caption{}
    \end{subfigure}

    \caption{
        Reconstruction of a two-dimensional distribution via forward and backward diffusion dynamics. 
        \textbf{Top (a):} Initial distribution. 
        \textbf{Bottom (b)--(c):} Results of the forward and backward processes, respectively. In each case, the reconstructed two-dimensional density is shown together with its marginal distributions along the spatial directions. The numerical marginals are compared against the corresponding analytical marginals, showing very good agreement.\textbf{Forward Error:} 0.0838, \textbf{Backward Error:} 0.0824.
        }
    \label{fig:2d}
\end{figure}

\begin{equation}
f_0(x,y) = \sum_{\boldsymbol{\mu}\in \mathcal{M}} 
\exp\!\left(-\| (x,y) - \boldsymbol{\mu} \|^2\right),
\quad 
\mathcal{M} = \{(2,2),(2,-2),(-2,2),(-2,-2)\}
\label{eq:2d_ini}
\end{equation}

where $(x,y) \in [-L,L]^2$ with $L = 12$ to ensure the domain was large enough so that we reduce the boundary effects. We used $Nx = Ny = 256$ grid points in both directions with $\alpha = 1$ with initial mass $\mu = \mu_c / 8$ again to ensure we are working in the subcritical regime. The results are shown in Figure~\ref{fig:2d} where we observe, first, the form of the initial distribution and, second, the forward and backward evolution of the density where we outline the marginals distributions and compare them with the analytical marginal distributions. This first set of set clearly demonstrates that the proposed stabilized reverse-time diffusion process and the introduced numerical scheme are able to correctly capture the generative process and reconstruct the initial distribution starting from the asymptotic distribution of the forward process.

\subsection{Generative Process as a Filtering Process}

In this test, we present an application of the proposed generative framework for filtering noisy distributions, which are typically reconstructed from scarce data. We also illustrate its relevance in the context of image filtering. We begin with a one-dimensional example by considering the initial distribution
\begin{equation}
\rho_0(x) = C \Big( e^{-10(x+1)^2} + e^{-10x^2} + e^{-10(x-1)^2}\Big),
\end{equation}
shown in Figure~\ref{fig:1d_filtering}(a). We sample this distribution using $N_p = 100$ particles and reconstruct it via a histogram-based approach with $N_x = 80$ bins over the domain $[-L,L]$, with $L=7$ and $\Delta x = \frac{2L}{N_x}$. This yields a noisy approximation of the initial distribution. By iteratively applying the generative process, we observe a progressive smoothing of the distribution, effectively acting as a filtering mechanism. The results are reported in Figure~\ref{fig:1d_filtering}, where $q_i(x,T)$ denotes the distribution obtained at the $i$-th iteration, for $i=1,2,3$. The model parameter is set to $\alpha = 1$, and normalization is enforced at each step. Successive applications of the forward and reverse dynamics progressively attenuate high-frequency oscillations while preserving the dominant features of the distribution.

\begin{figure}
    \centering

    \begin{subfigure}[b]{0.4\textwidth}
        \centering
        \includegraphics[width=\textwidth]{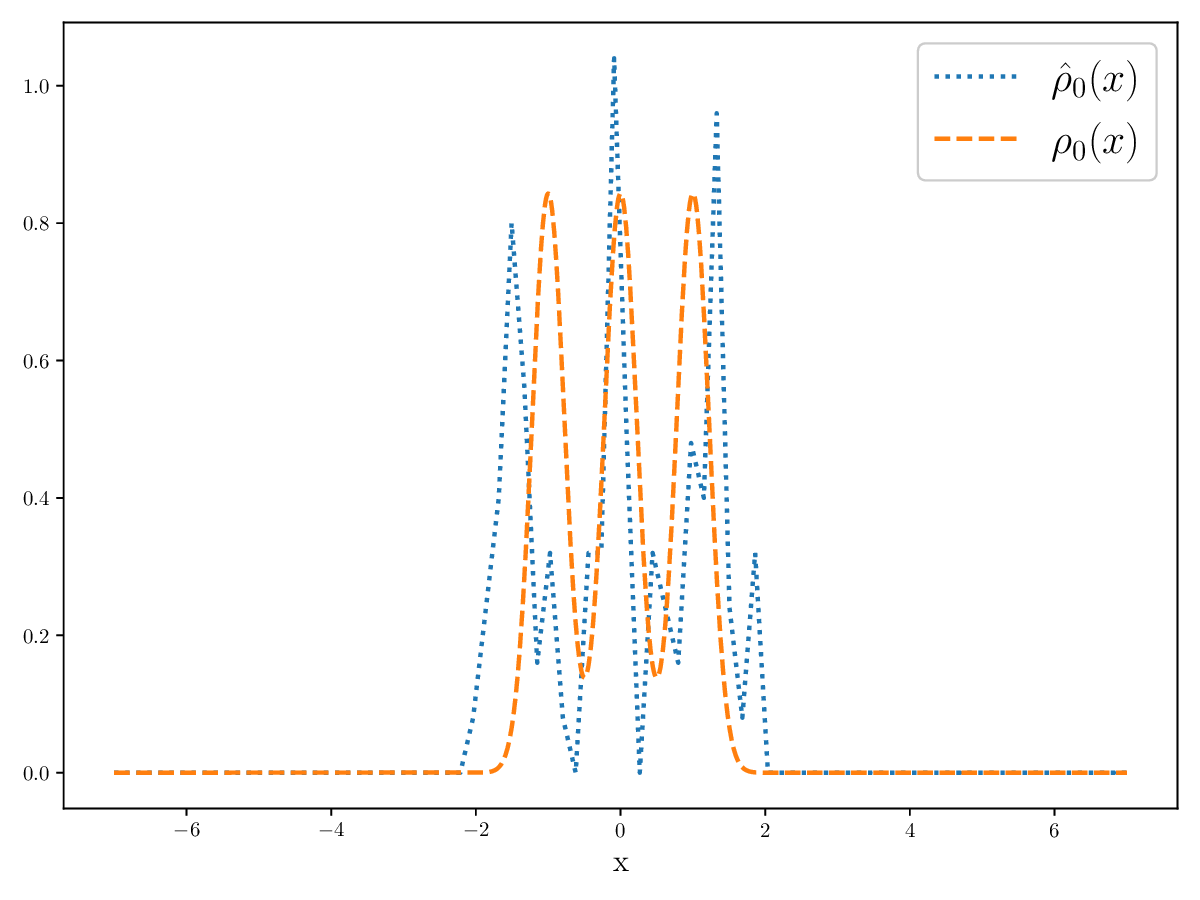}
        \caption{}
    \end{subfigure}
    \begin{subfigure}[b]{0.4\textwidth}
        \centering
        \includegraphics[width=\textwidth]{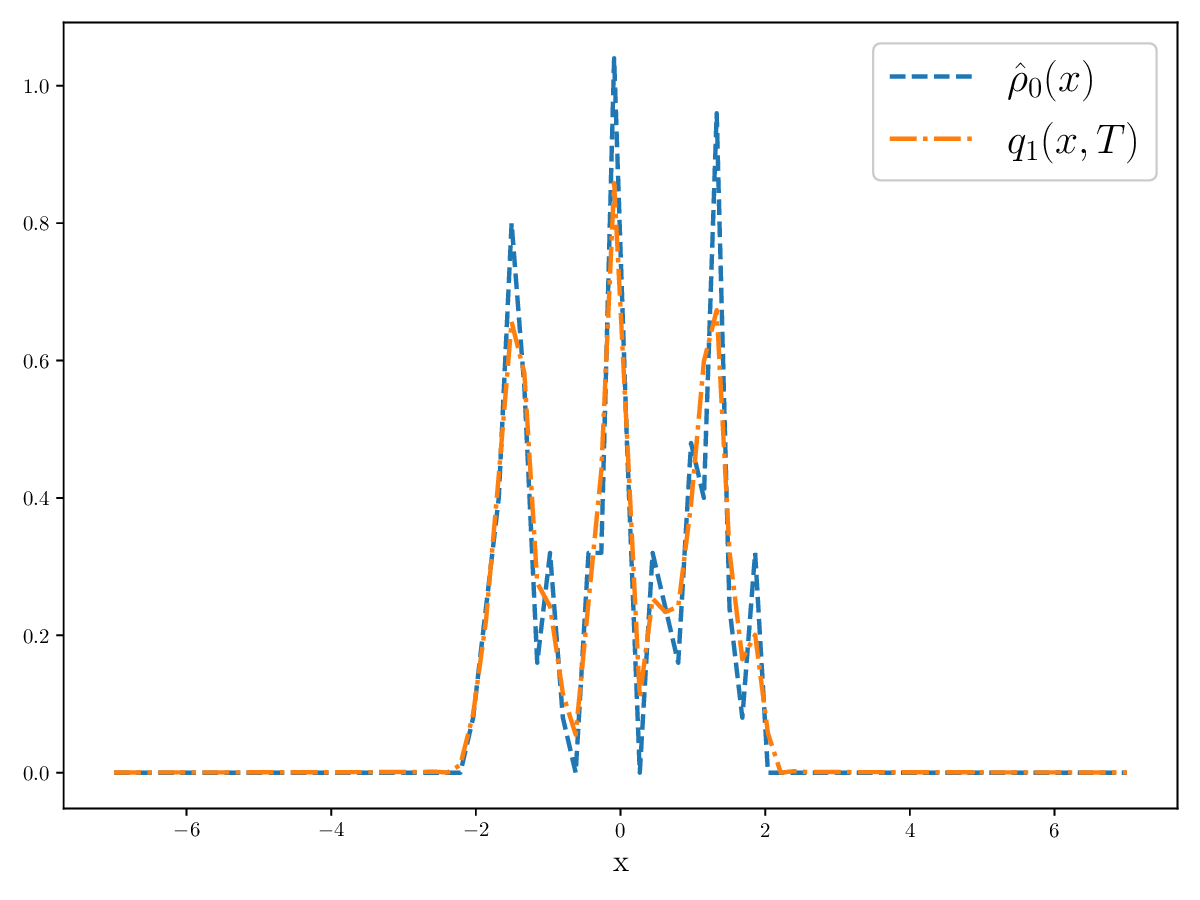}
        \caption{}
    \end{subfigure}

    \begin{subfigure}[b]{0.4\textwidth}
        \centering
        \includegraphics[width=\textwidth]{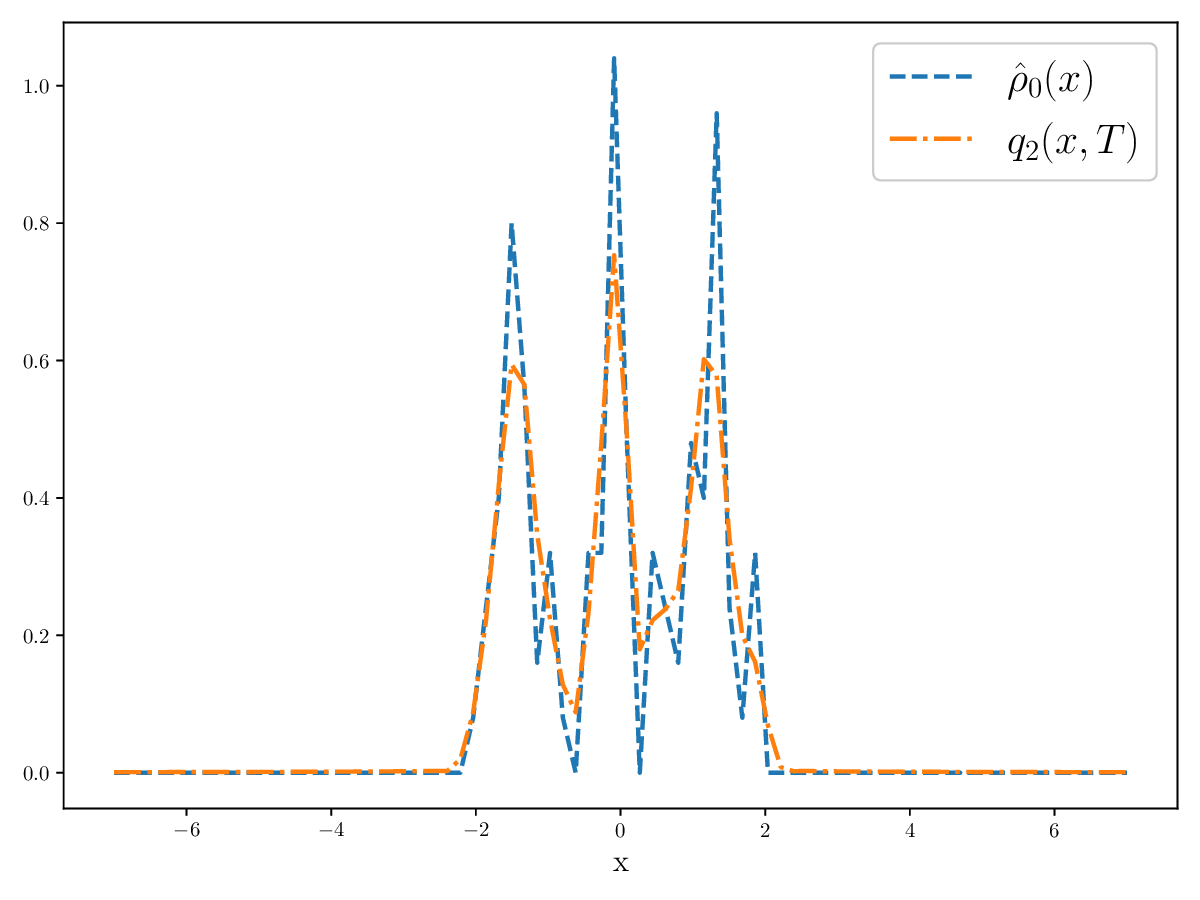}
        \caption{}
    \end{subfigure}
    \begin{subfigure}[b]{0.4\textwidth}
        \centering
        \includegraphics[width=\textwidth]{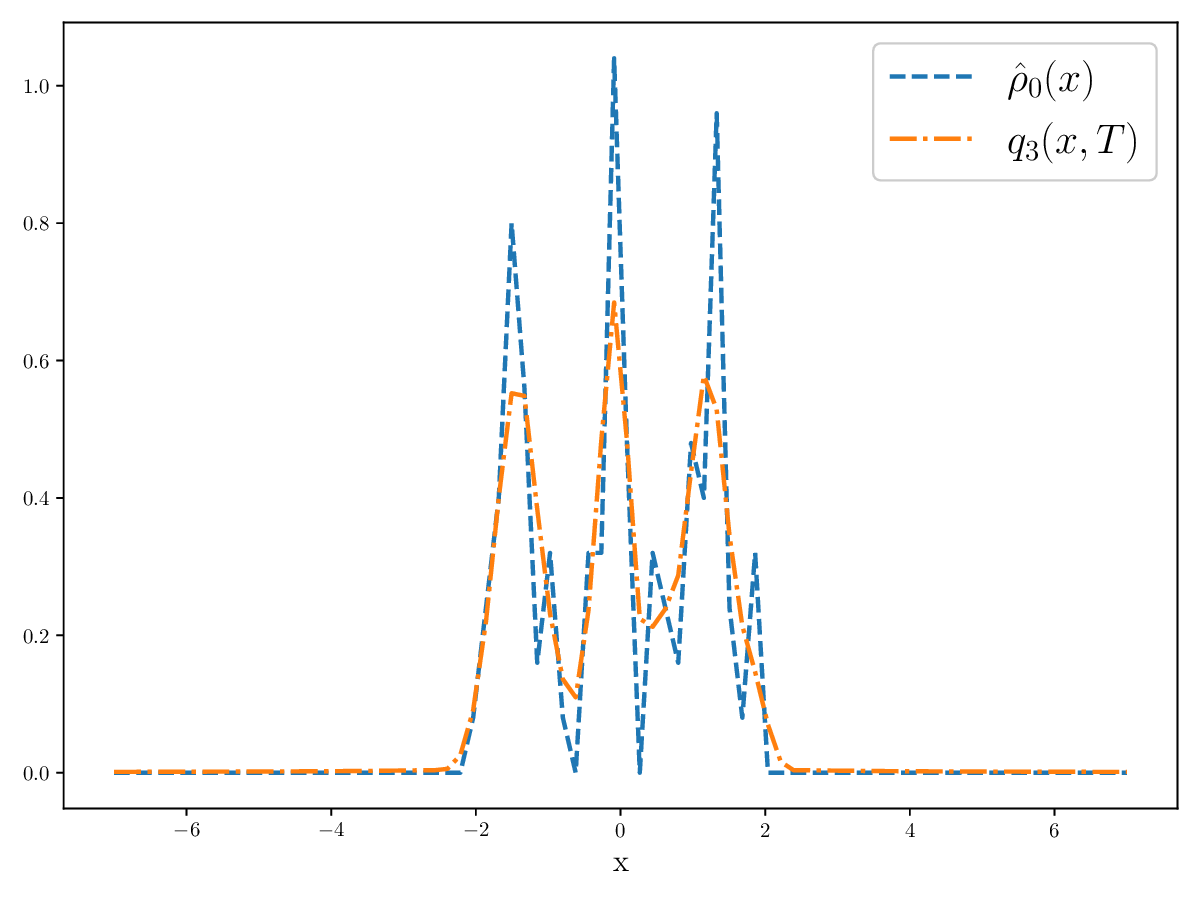} 
        \caption{}
    \end{subfigure}

    \caption{Smoothing effect of the generative process on a noisy distribution. In (a) we show the initial distribution and the one obtained when sampled from $N_p = 100$ particles while considering $80$ histograms. In (b), (c), and (d) we show the results of the generative process applied to the noisy distribution for different number of iterations in contrast with the original noisy distribution. It can be clearly observed that the generative process acts as a filtering mechanism, progressively smoothing the distribution and attenuating the peaks.}
    \label{fig:1d_filtering}
\end{figure}

We next extend the analysis to a two-dimensional setting, considering the initial distribution defined in Equation~\eqref{eq:2d_ini}. The spatial domain is $[-L,L]^2$ with $L=8$, discretized using $N_x = N_y = 128$ grid points. A noisy reconstruction is obtained from $N_p = 10^4$ particles using a histogram-based method. The results are shown in Figure~\ref{fig:2d_filtering}, where we compare the initial, reconstructed, and filtered distributions after a single iteration of the generative process. Owing to the larger number of neighboring cells involved in the evolution, the filtering effect is more pronounced in two dimensions, and a single iteration already provides a significant reduction of the sampling noise. We also report the marginal distributions and slices at $y = -1.35, 0.07, 1.49$, where the smoothing efffect is clearly visible through the attenuation of peaks.

\begin{figure}
    \centering

    \begin{subfigure}{\textwidth}
        \centering
        \includegraphics[width=0.8\textwidth]{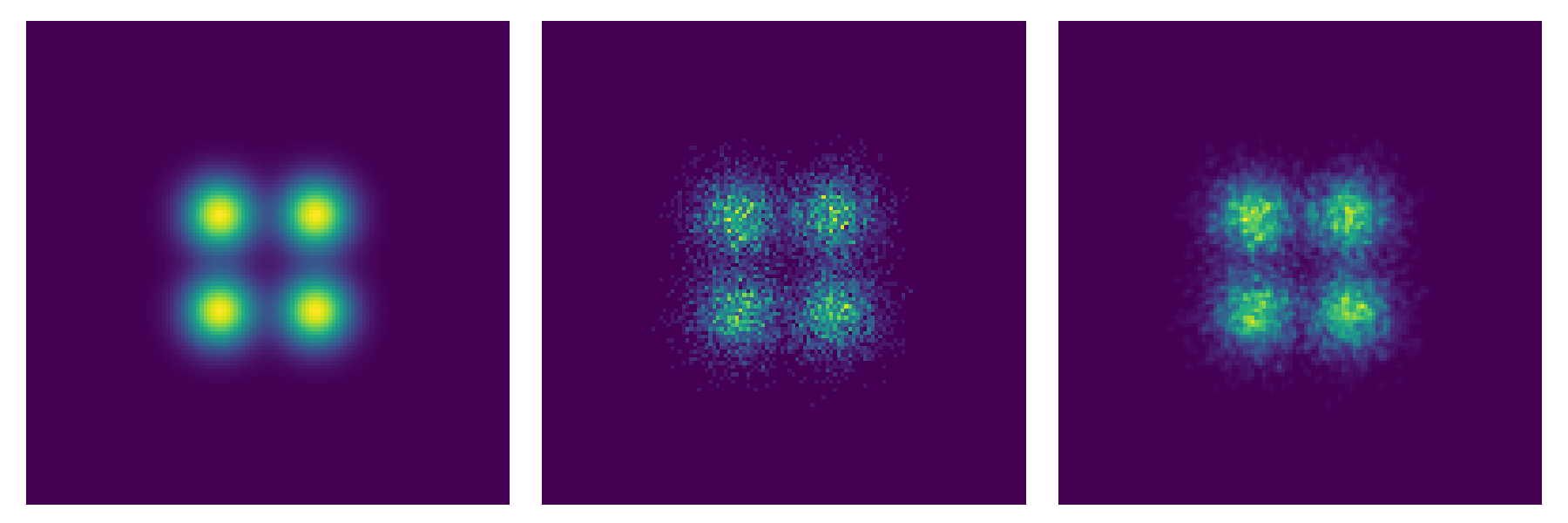}
        \caption{}
    \end{subfigure}

    \begin{subfigure}{\textwidth}
        \centering
        \includegraphics[width=\textwidth]{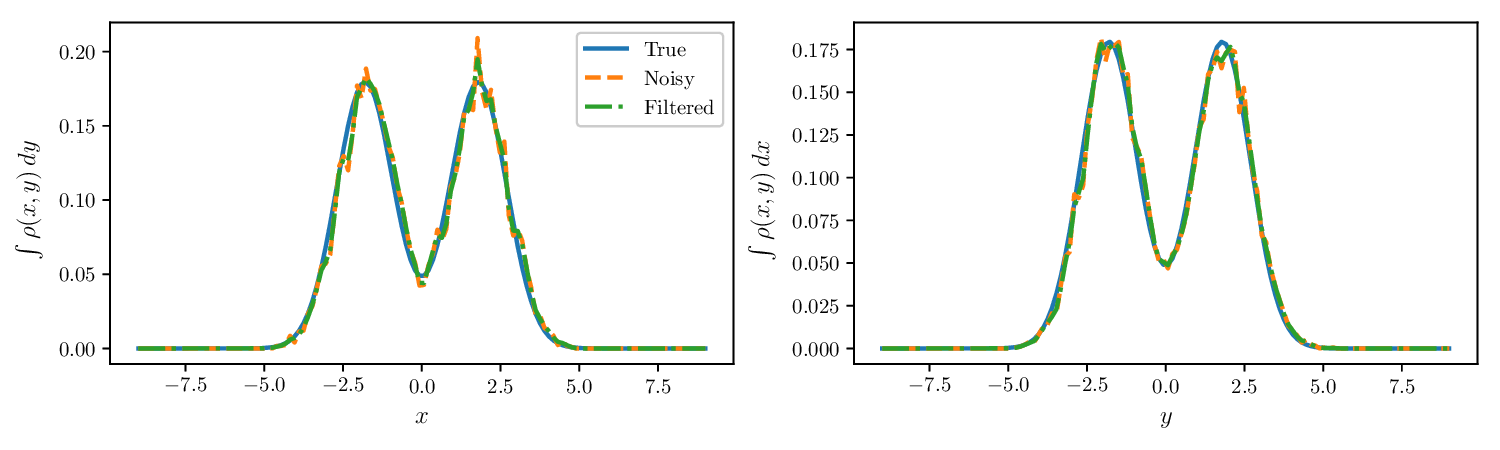}
        \caption{}
    \end{subfigure}

    \begin{subfigure}{\textwidth}
        \centering
        \includegraphics[width=0.8\textwidth]{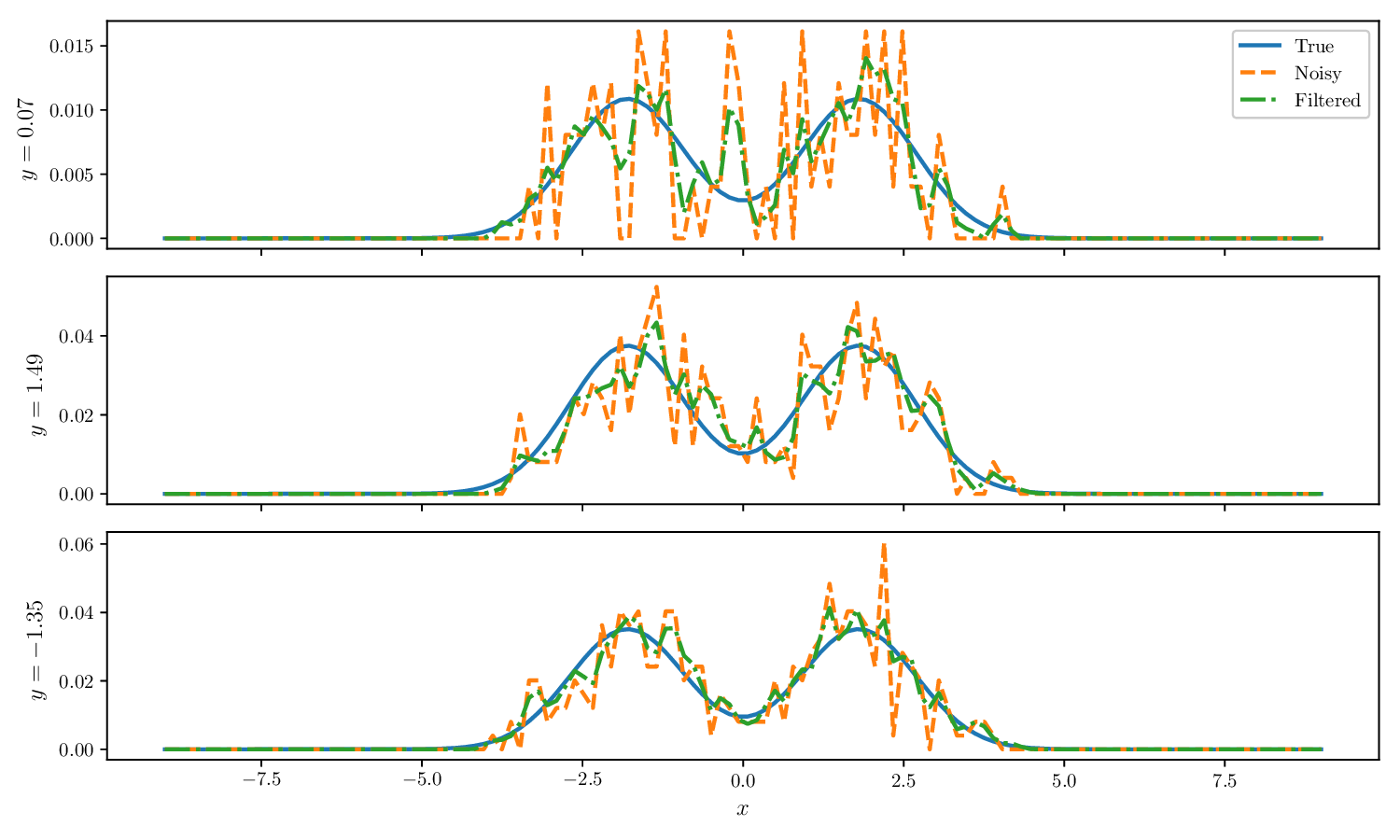}
        \caption{}
    \end{subfigure}

    \caption{
        Generative process acting as a filtering mechanism on a noisy two-dimensional distribution. 
        (a) Initial distribution, its noisy reconstruction from $N_p = 10^4$ particles using a $128 \times 128$ histogram, and the filtered result after one iteration. 
        (b) Marginal distributions along the spatial directions, showing clear smoothing through peak attenuation. 
        (c) Slices at $y = -1.35, 0.07, 1.49$, further illustrating the filtering effect.
        }
    \label{fig:2d_filtering}
\end{figure}

\begin{figure}
    \centering

    \begin{subfigure}{\textwidth}
        \centering
        \includegraphics[width=0.8\textwidth]{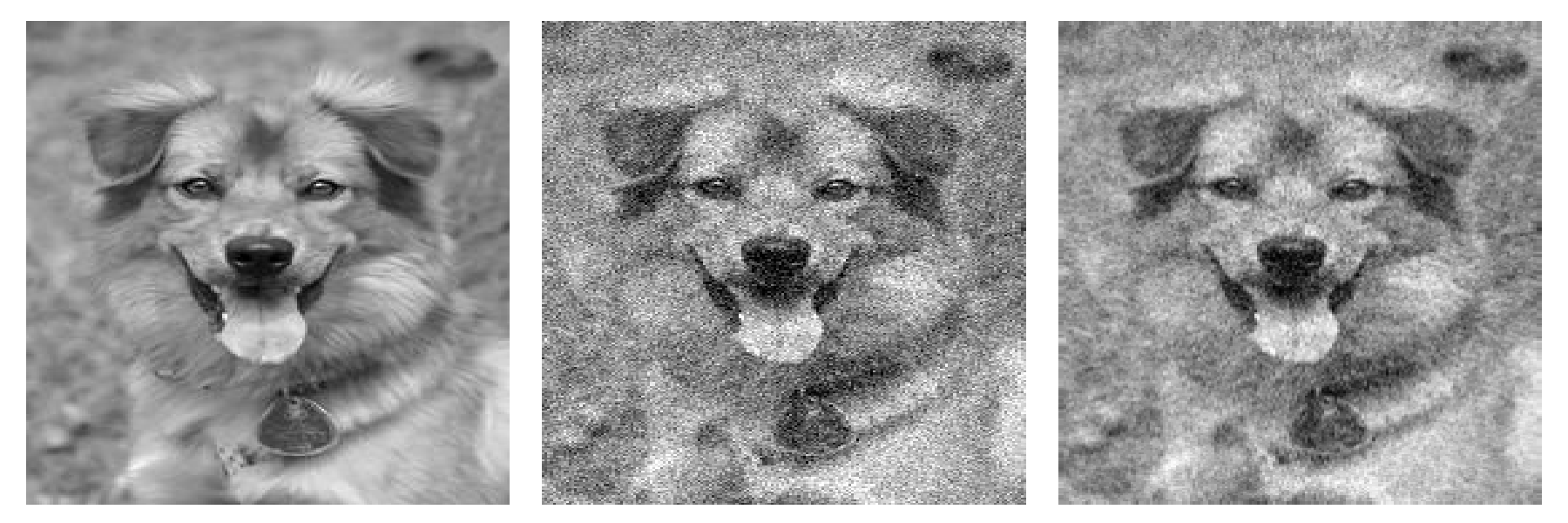}
        \caption{}
    \end{subfigure}

    \begin{subfigure}{\textwidth}
        \centering
        \includegraphics[width=0.8\textwidth]{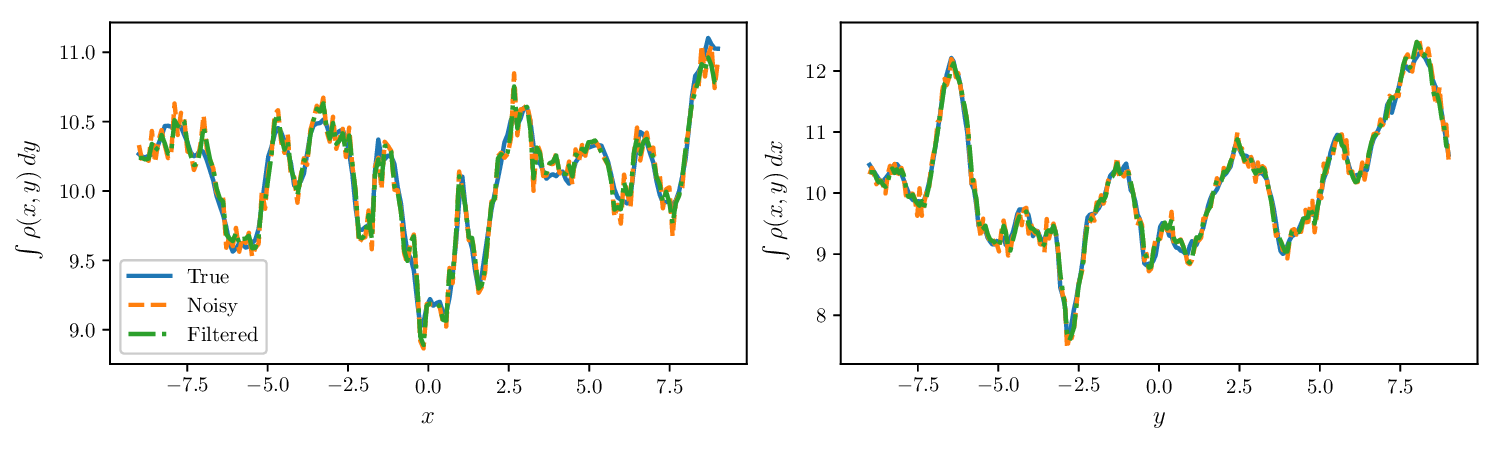}
        \caption{}
    \end{subfigure}

    \begin{subfigure}{\textwidth}
        \centering
        \includegraphics[width=0.8\textwidth]{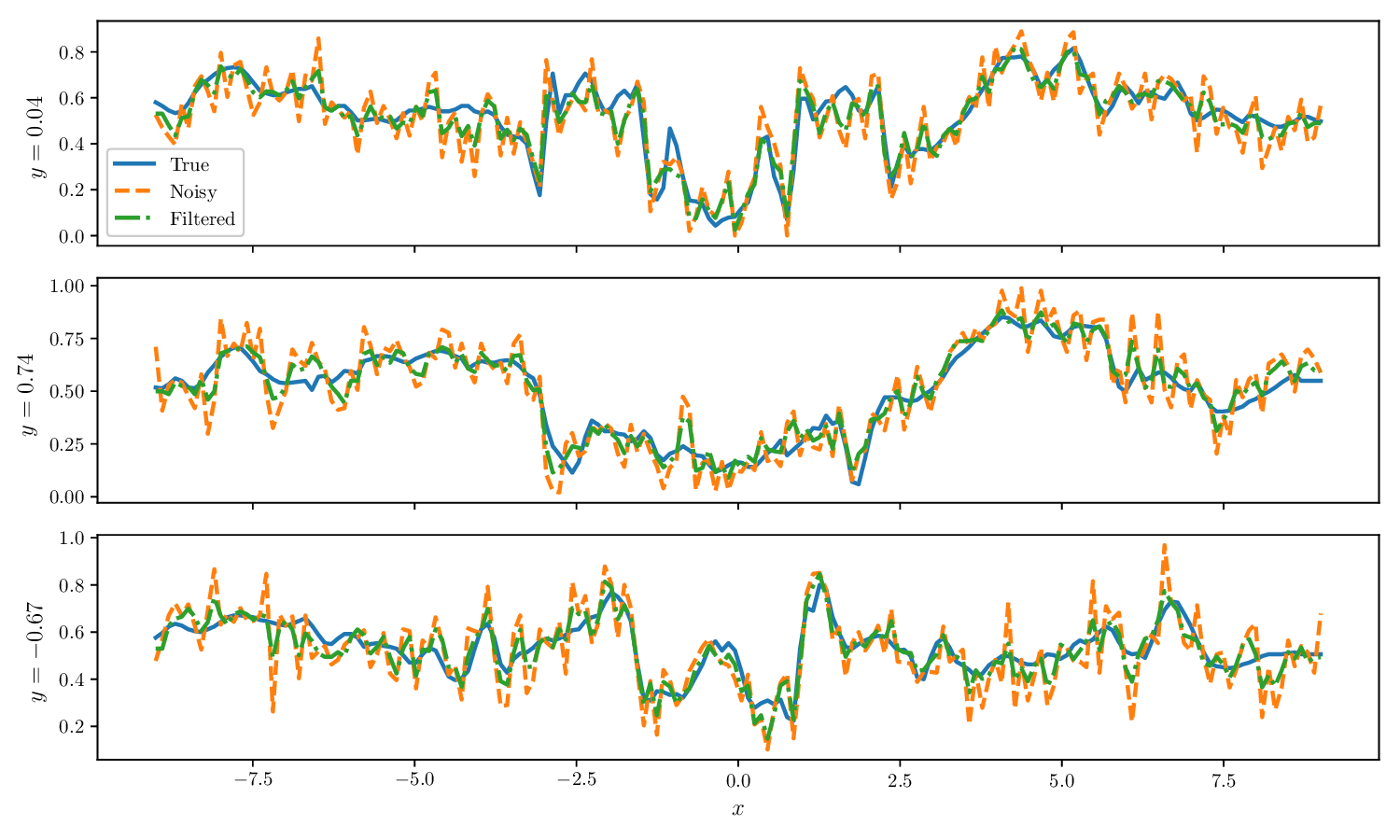}
        \caption{}
    \end{subfigure}

    \caption{Generative application for Image smoothing. In (a) we show the original image, a noisy image after applying a Gaussian filter and the image obtained after just one iteration of the generative process. In (b) we demostrate the smoothing effect by looking at the marginal distributions and in (c) we compute the one dimesional distribution for $y = -0.67,0.74,0.04$.}
    \label{fig:2d_dog}
\end{figure}

Finally, we illustrate the proposed methodology on a grayscale image corrupted by additive Gaussian noise. We consider the image of a dog shown in Figure~\ref{fig:2d_dog}(a), of size $256 \times 180$ pixels. This example also highlights that the proposed numerical scheme is applicable to non-square geometries. A Gaussian noise perturbation is applied to the original image, after which we employ the generative filtering procedure, as illustrated in the top middle column of Figure~\ref{fig:2d_dog}. The spatial domain is set to $[-L,L]^2$ with $L=8$, and we consider $N_x = 256$ and $N_y = 180$ grid points. For the model parameters, we set $\alpha = 1$ and perform one iteration of the generative process. The results are reported in the right column of Figure~\ref{fig:2d_dog}, where we display the original, noisy, and filtered images. We also present the marginal distributions for each case, highlighting the filtering effect of the generative process. Additionally, we show slices of the distribution for selected values $y = -0.67, 0.04, 0.74$, where the smoothing effect is clearly visible through the attenuation of peaks. Overall, the method produces a smoothed version of the noisy image. While sharp edges are not fully recovered, the procedure effectively reduces the granularity introduced by the Gaussian noise.

\subsection{Particle-Based Generative Dynamics}

In this section, we demonstrate the generative capability of the particle dynamics for $\alpha = 0$, using the scheme in Equation~\eqref{eq:sde_back_upd}. We consider $N_p = 10^6$ particles in two dimensions, initially distributed according to the ring-shaped density
\begin{figure}
\centering

\begin{minipage}{0.35\textwidth}
\centering
\includegraphics[width=\textwidth]{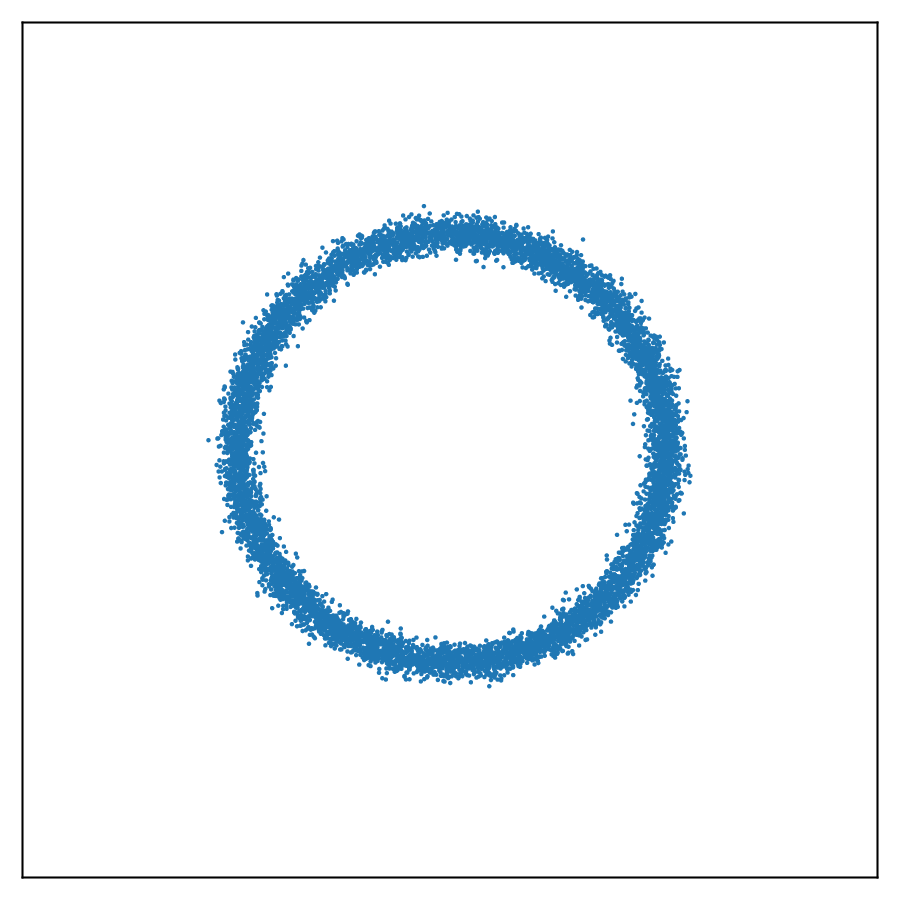}
\end{minipage}
\hspace{0.05cm}
\begin{minipage}{0.35\textwidth}
\centering
\includegraphics[width=\textwidth]{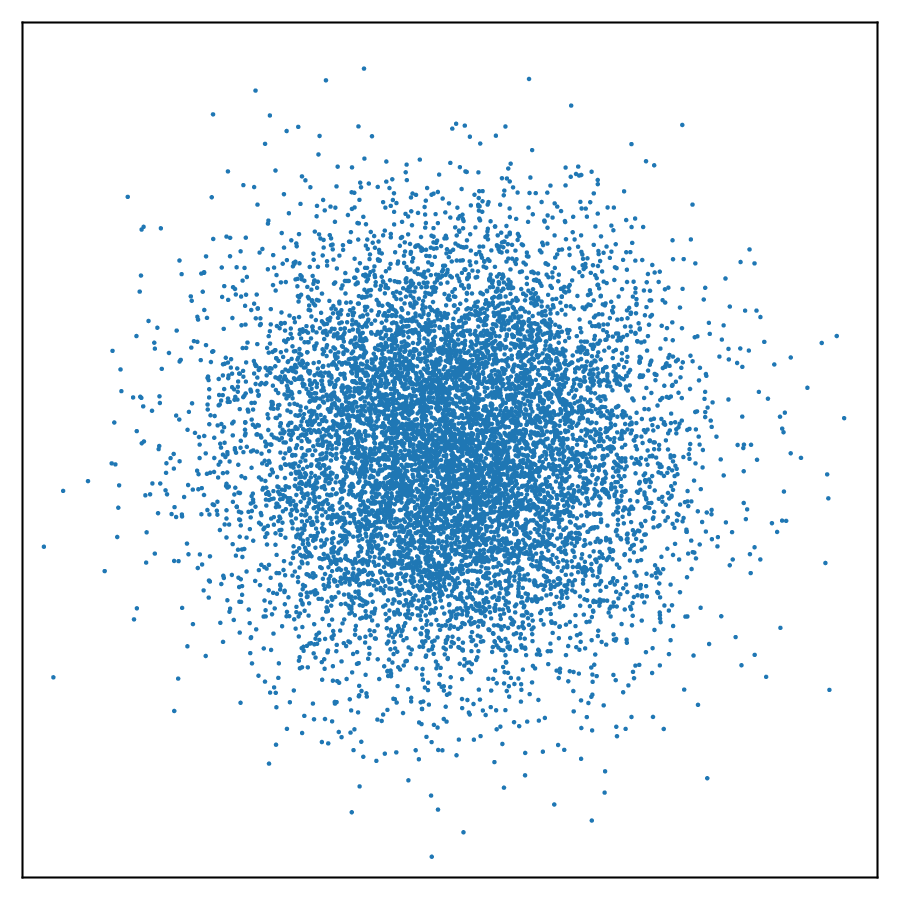}
\end{minipage}

\begin{minipage}{0.4\textwidth}
\centering
\includegraphics[width=\textwidth]{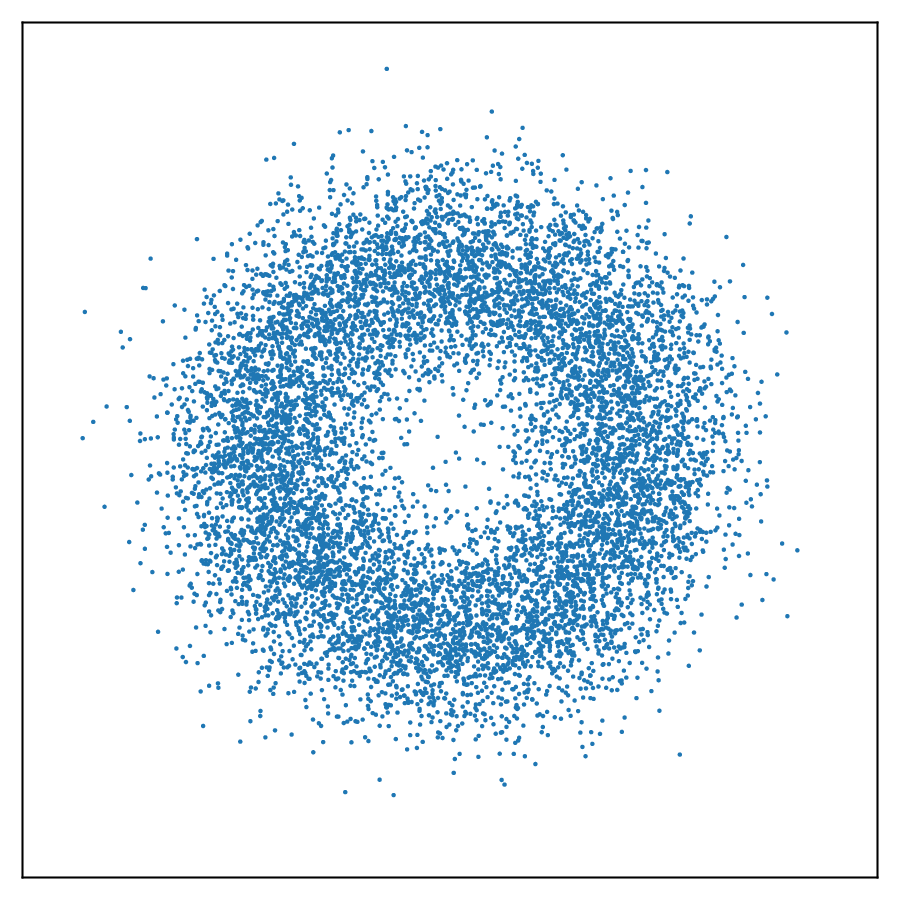}
\end{minipage}

\begin{minipage}{0.4\textwidth}
\centering
\includegraphics[width=\textwidth]{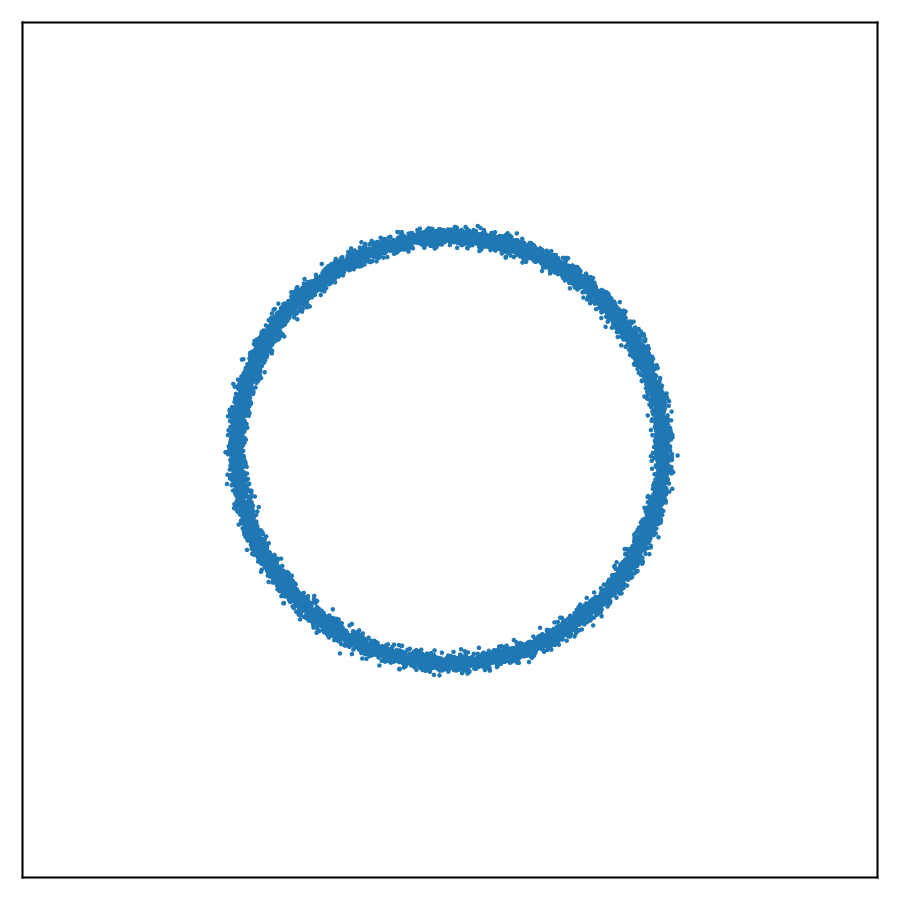}
\end{minipage}

\caption{Particles initially sampled from a ring distribution (top row) spread under the forward dynamics. The backward process (last columns) progressively reconstructs the ring structure (bottom row).}
\label{fig:particle_2d}
\end{figure}

\begin{figure}
\centering

\begin{subfigure}{0.48\textwidth}
\centering
\includegraphics[width=0.8\textwidth]{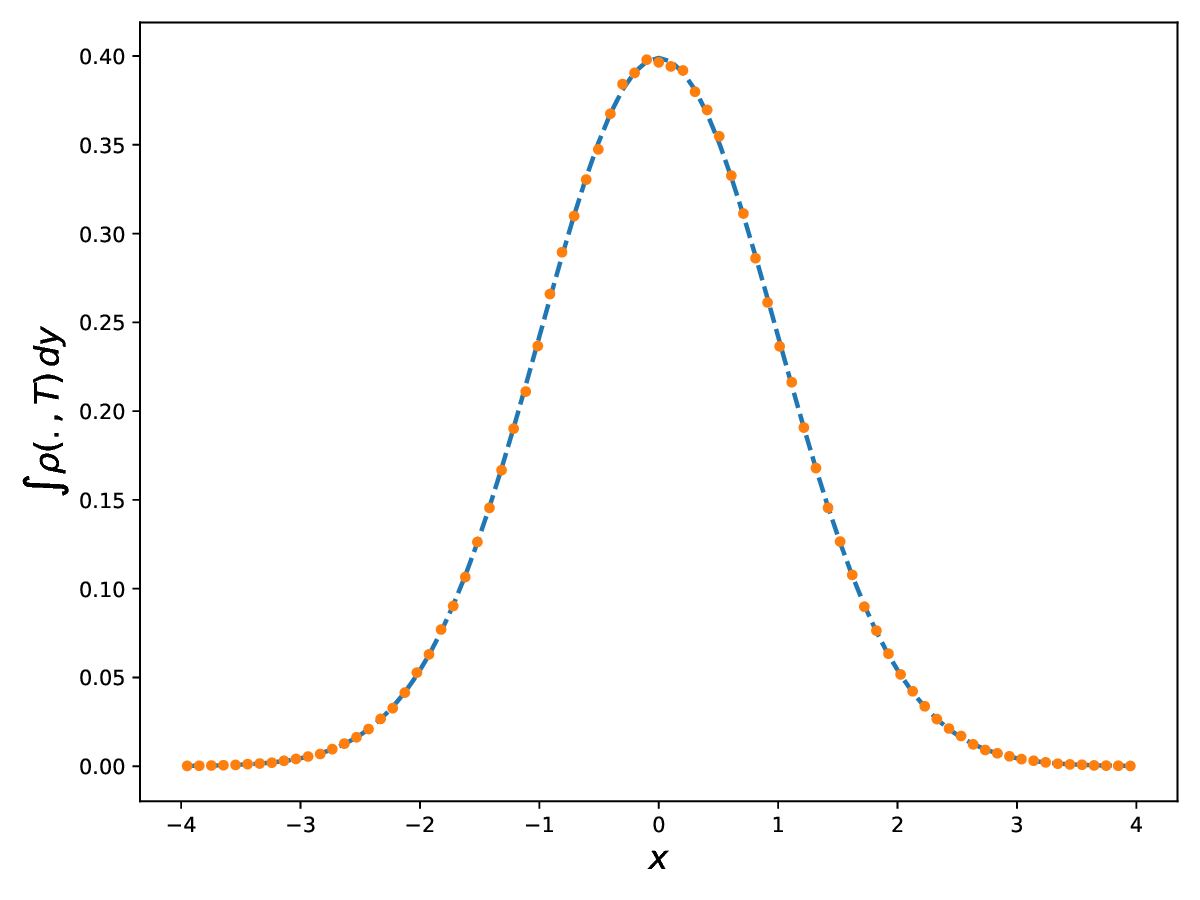}
\caption{}
\end{subfigure}
\hfill
\begin{subfigure}{0.48\textwidth}
\centering
\includegraphics[width=0.8\textwidth]{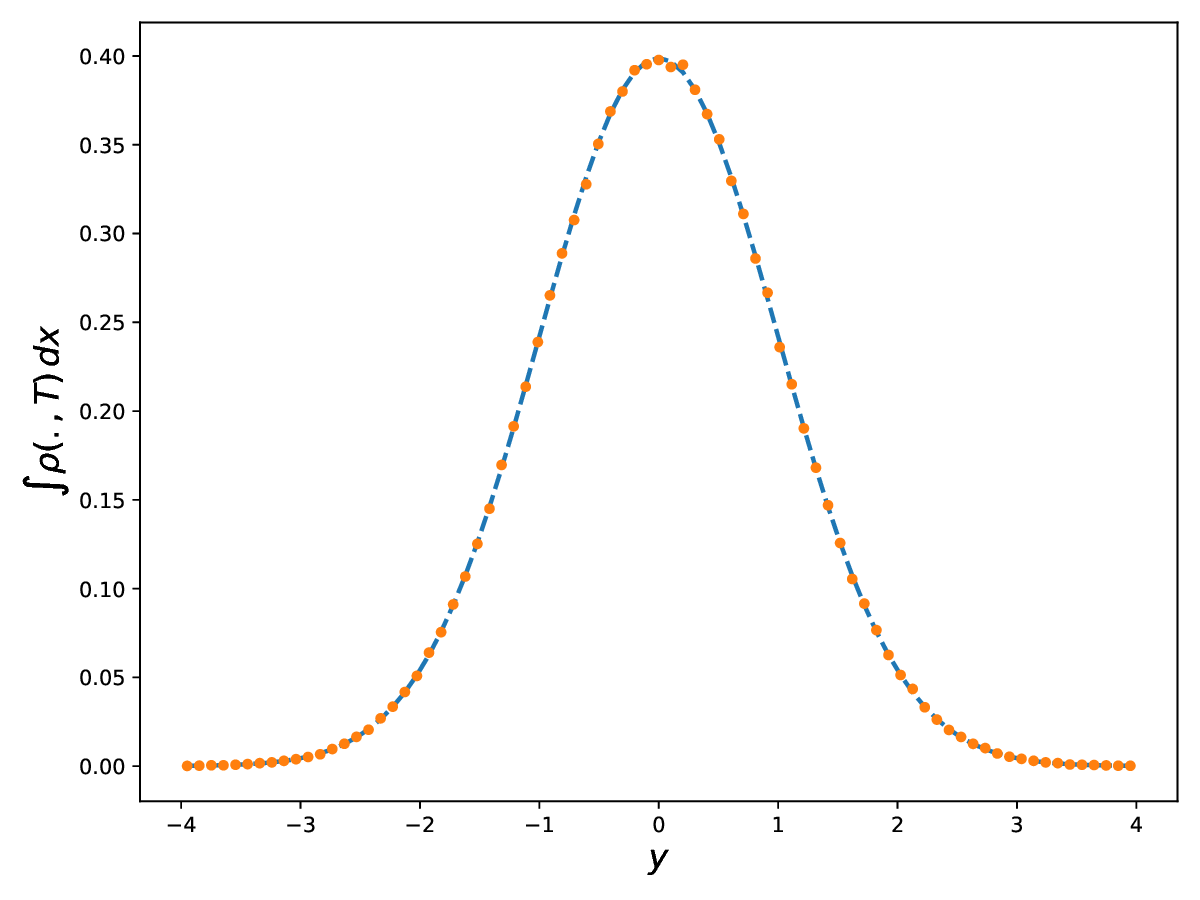}
\caption{}
\end{subfigure}

\vspace{0.3cm}

\begin{subfigure}{0.48\textwidth}
\centering
\includegraphics[width=0.8\textwidth]{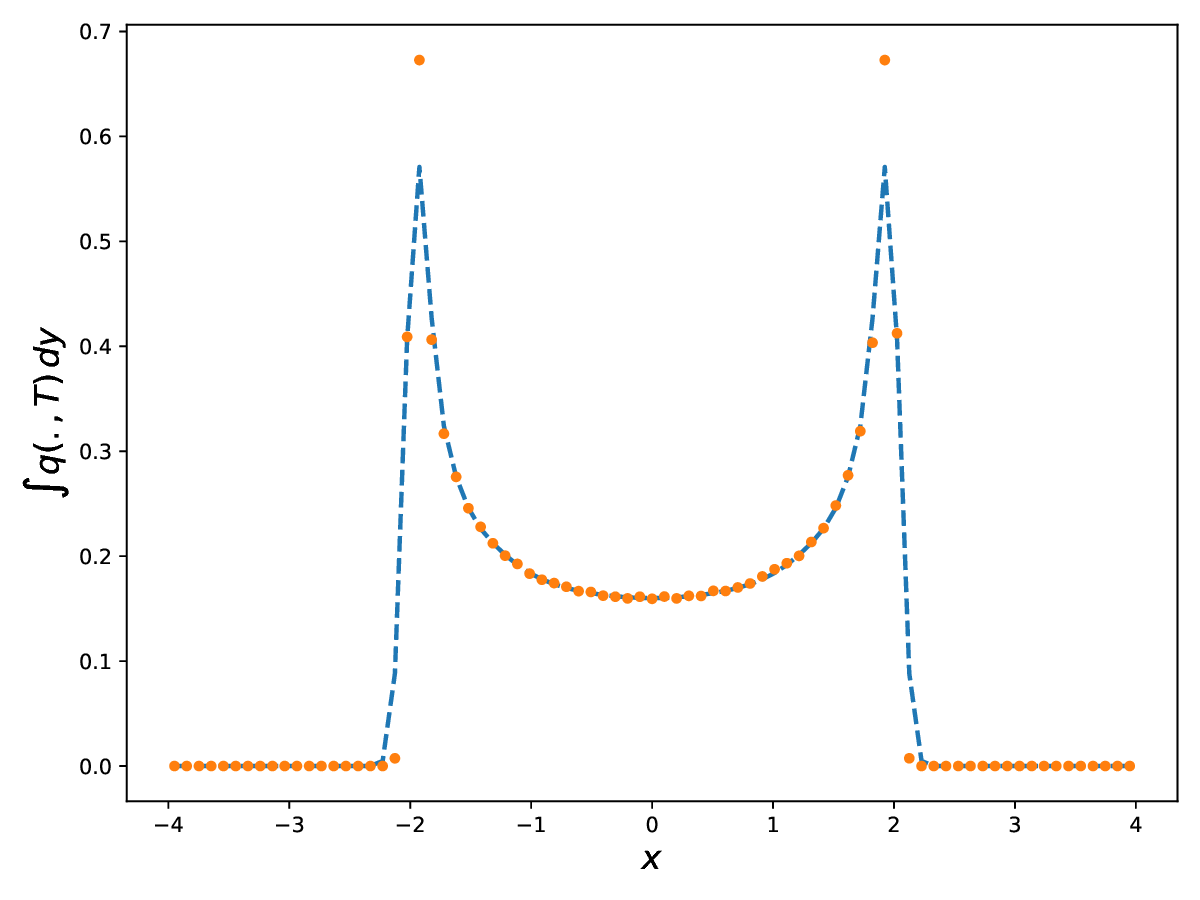}
\caption{}
\end{subfigure}
\hfill
\begin{subfigure}{0.48\textwidth}
\centering
\includegraphics[width=0.8\textwidth]{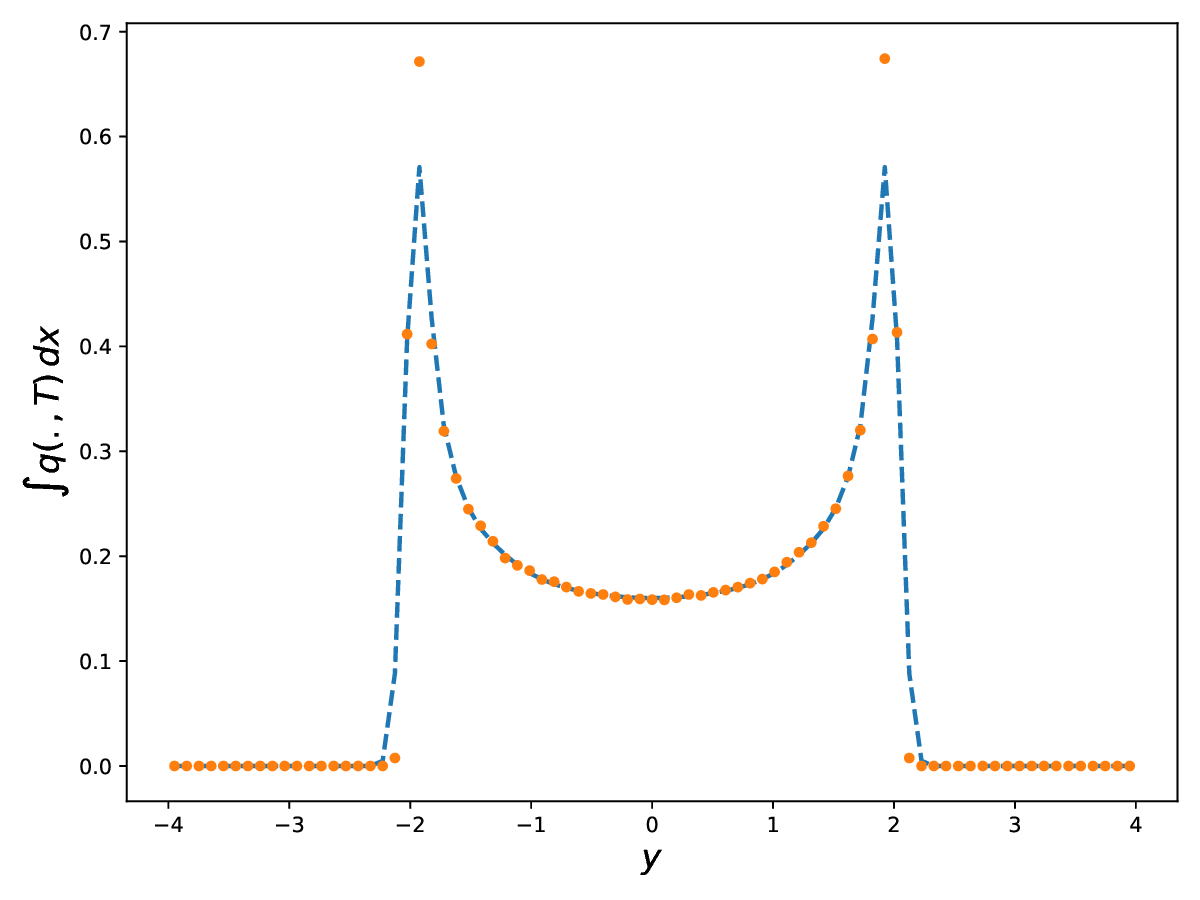}
\caption{}
\end{subfigure}

\caption{Marginal distributions obtained at the end of the forward and reverse processes together with the corresponding analytical marginals using a system of $N_p = 10^6$ particles through the scheme shown in Equation~\eqref{eq:sde_back_upd} with a final time $T_f = 3$ and a step size $\Delta t = 10^{-2}$. We observe that the marginals distributions obtained from the forward and backward processes are in good agreement with the corresponding analytical marginal distributions, confirming the generative capacity of the particle dynamics. We report the $L1-$norm errors: \textbf{Forward process:} $E_x = 0.006$ and $E_y = 0.006$, \textbf{Backward process:} $E_x = 0.048$ and $E_y = 0.048$.}
\label{fig:2x2}
\end{figure}

\begin{figure}
\centering
\includegraphics[width=0.8\textwidth]{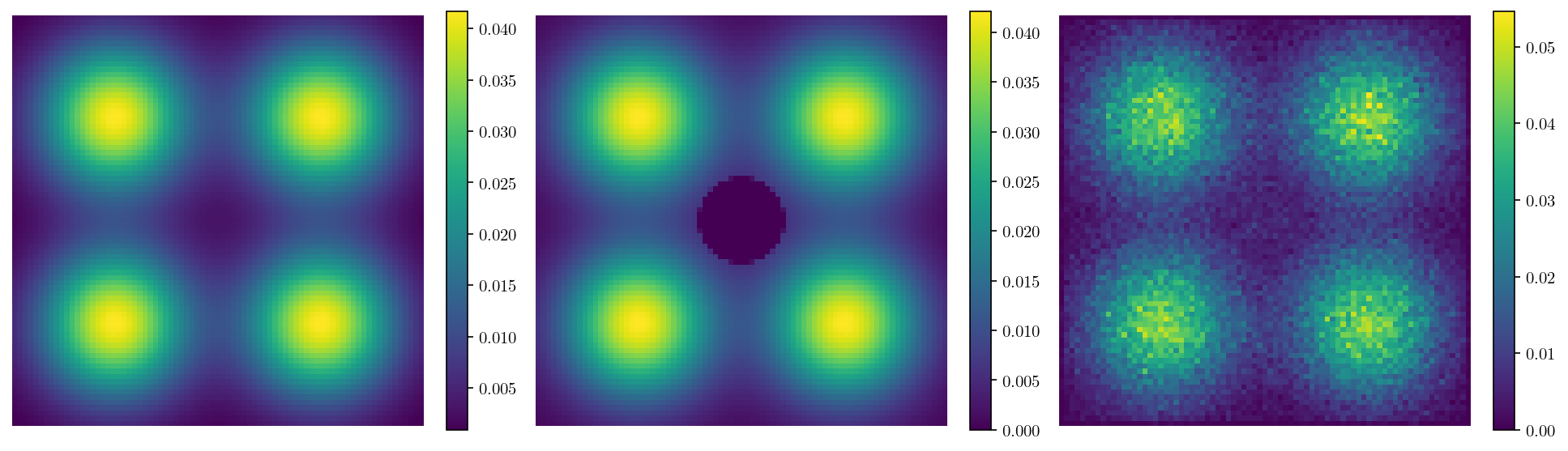}
\caption{By modifying the prior distribution, it is possible to reconstruct regions of a density where data is missing. We demonstrate the original distribution, the incomplete distribution where there is missing data around the center of the distribution, and the reconstructed distribution with a histogram-based approximation with $80$ bins in each spatial direction and $N_p = 10^5$ particles. The reconstructed distribution corresponds to the optimal value of $A$ that minimizes the reconstruction error.}
\label{fig:2d_recon}
\end{figure}

\begin{figure}
\centering
\includegraphics[width=0.7\textwidth]{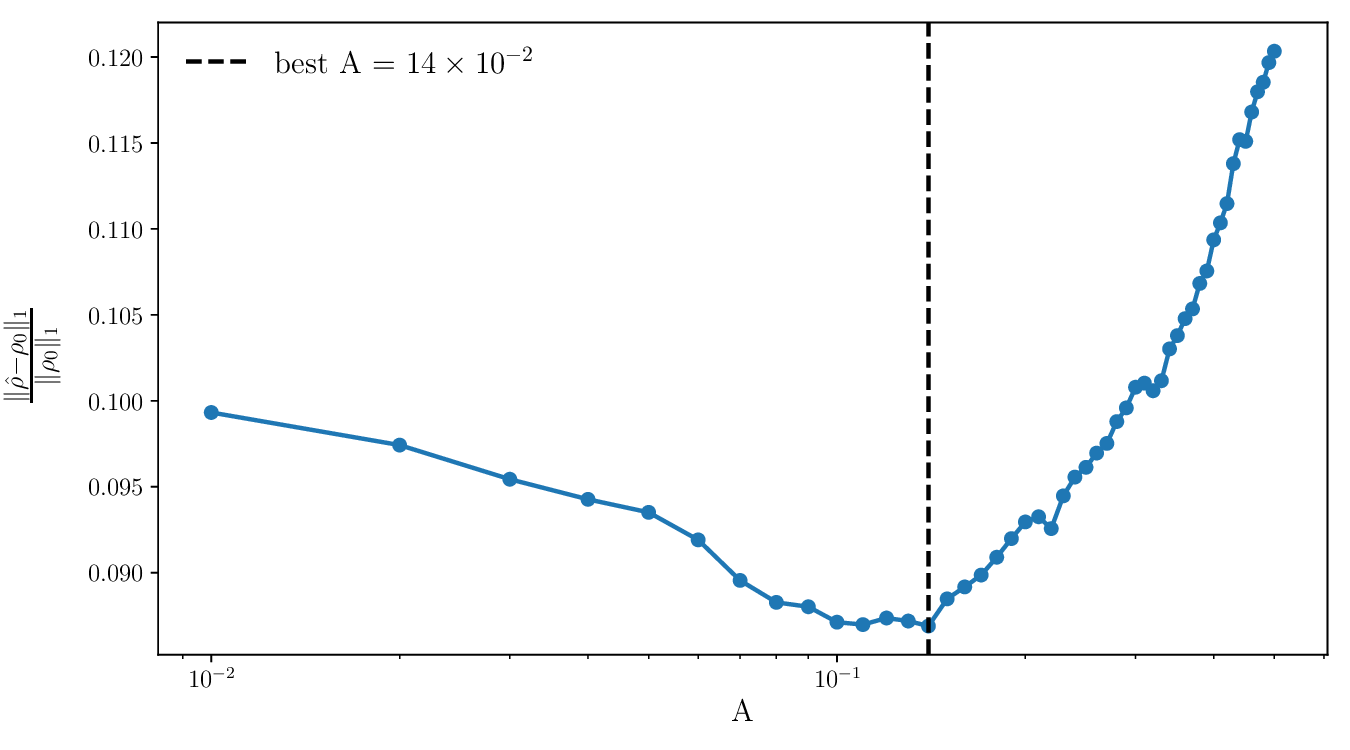}
\caption{For different values of $A$, we evaluate the relative $L^1$ error between the reconstructed and analytical distributions. We observe that there exists an optimal choice of $A$ for which the reconstruction error is minimized and the original distribution is best recovered.}
\label{fig:2d_recon_error}
\end{figure}

\begin{equation}
    \rho_0(x,y) = C \exp \left\{ -\frac{1}{2} \left(\frac{r(x,y)-R}{\sigma_r}\right)^2 \right\},
\end{equation}

with $r(x,y)=\sqrt{x^2+y^2}$, $R=2$, and $\sigma_r = 0.08$. The reconstruction is performed via a histogram with $80$ bins per spatial direction. The simulation is run up to $T_f = 3$ with time step $\Delta t = 10^{-2}$. As shown in Figure~\ref{fig:particle_2d} we follow the trajectory of $10^3$ randomly chosen particles through out the entire diffusion process. The forward process diffuses the initial ring, smoothing the distribution until it becomes approximately Gaussian. The backward dynamics then reverse this evolution, progressively concentrating the particles along the original circular structure. An intermediate configuration illustrates this transition, highlighting the reorganization of mass from a diffuse state to a sharply localized ring.

Furthermore, we compute the marginals distributions at $\rho(.,T)$ and $q(.,T)$, i.e. at the end of the forward and backward process respectively, and compare them to the analytical marginal distributions as shown in Figure~\ref{fig:2x2}.

\subsection{Particle-Based Reconstruction of Missing Data}

By modifying the prior distribution, it is possible to reconstruct regions of a density where data is missing. Let $\rho(x)$ be a given distribution and let $I_{\text{gap}} \subset \Omega$ denote a region where the value of $\rho(x)$ is unknown (or set to zero due to lack of data). We define the modified prior

\begin{equation}
\hat{\rho}(x) =
\begin{cases}
\rho(x), & x \notin I_{\text{gap}}, \\[4pt]
\gamma, & x \in I_{\text{gap}},
\end{cases}
\end{equation}

where $\gamma = A \max_x \rho(x)$ and $A > 0$ is a tunable parameter. By varying $A$, we effectively control the amount of prior mass introduced in the missing region, which directly influences the backward diffusion process. In particular, there exists an optimal value of $A$ for which the reconstructed distribution best approximates the original one. To illustrate this effect, we consider a two-dimensional setting with initial distribution

\begin{equation}
f_0(x,y) = \sum_{\boldsymbol{\mu}\in \mathcal{M}} 
\exp\!\left(-\| (x,y) - \boldsymbol{\mu} \|^2\right),
\quad 
\mathcal{M} = \{(2,2),(2,-2),(-2,2),(-2,-2)\},
\end{equation}

and define the missing region as
\[
I_{\text{gap}} = \left\{(x,y)\in\Omega:\sqrt{x^2+y^2}<0.85\right\}.
\]

We sample $N_p = 10^5$ particles and evolve the system up to final time $T_f = 3$ using a time step $\Delta t = 10^{-2}$. The diffusion-based reconstruction is then performed for different values of $A$. In Figure~\ref{fig:2d_recon}, we show the original distribution, the incomplete prior, and the reconstructed distribution corresponding to the optimal value of $A$. The reconstruction is obtained using a histogram-based approximation with $80$ bins in each spatial direction. Finally, in Figure~\ref{fig:2d_recon_error}, we report the relative $L^1$ error between the reconstructed and the analytical distributions for different values of $A$. The results indicate the existence of an optimal value of $A$ that minimizes the reconstruction error, highlighting the importance of the prior distribution in recovering missing regions.

\section{Conclusions}

In this work, we introduced an alternative generative framework based on a nonlinear Fokker--Planck equation with a superlinear drift term. Starting from a nonlinear interacting particle system, we derived its macroscopic description through the mean-field limit and constructed a forward--backward generative process extending the classical score-based diffusion framework to a broader class of nonlinear dynamics. A distinctive feature of the proposed model is the occurrence of finite-time condensation. We proved that, for suitable values of the model parameters and sufficiently large initial mass, the solution loses $L^2$ regularity in finite time, leading to the attainment of the asymptotic condensed state within a finite time horizon.

Based on the proposed forward dynamics, we derived a stabilized reverse-time diffusion process capable of reconstructing the initial distribution from the stationary state of the forward evolution. Furthermore, we introduced numerical discretizations for both, the forward and backward equations that accurately capture the asymptotic behavior of the continuous model. It also  successfully reconstructs  initial distribution in both one- and two-dimensional settings, thereby preserving the generative properties of the proposed framework at the numerical level.

The proposed methodology has been illustrated through a series of computational experiments. Beyond validating its generative capability, we showed that the iterative application of the forward and backward dynamics can be used   as a filtering mechanism for noisy distributions and images. Moreover, by exploiting the particle formulation of the reverse process, we demonstrated that missing regions of a distribution can be reconstructed.  

Overall, the present work shows that the generative paradigm is not restricted to the classical score-based diffusion model, but can be naturally extended to a broader class of nonlinear evolution equations. We believe that the PDE viewpoint adopted here provides a complementary framework for studying the analytical properties of generative models and for constructing alternative forward and reverse dynamics beyond the classical Ornstein--Uhlenbeck process. Future research will focus on the design of numerical schemes capable of preserving the generative properties of the proposed framework in the presence of singularity formation, as well as  the development of a  theoretical framework for nonlinear evolution equations in this context.

\subsection*{Acknowledgements}
This work has been supported by the European Union’s Horizon Europe research and innovation programme under the Marie Sklodowska-Curie Doctoral Network Datahyking (Grant No. 101072546) as well as by the Deutsche Forschungsgemeinschaft (DFG, German Research Foundation) under HE5386/30-1  Effiziente Ermittlung von Temperaturfeldern mithilfe von Physics-Informed Neural Networks zur Modellierung des thermo-elastischen Verhaltens von Werkzeugmaschinen (537928890) and HE5386/33-1 Control of Interacting Particle Systems, and Their Mean-Field, and Fluid-Dynamic Limits (560288187).

\end{document}